\newtheorem{theorem}{Theorem}[section]
\newtheorem{lemma}{Lemma}[section]
\newtheorem{remark}{Remark}[section]
\newcommand{\R}{\mathbb R}
\newcommand{\E}{\mathbb E}
\newcommand{\N}{\mathbb N}
\newcommand{\Hi}{\mathcal H}
\newcommand{\argmin}{\text{argmin}}
\newcommand{\opt}{^{\star}}
\newcommand{\ALG}{\texttt{ALG}}
\newcommand{\OLNM}{\texttt{OLNM}}
\newcommand{\ORGD}{\texttt{ORGD}}
\renewcommand{\SS}{\mathcal{S}} 
\newcommand{\rev}[1]{{\color{black} #1}}
\begin{document}

\title{Bounds for the tracking error of  first-order online optimization methods}

\author{Liam Madden\thanks{\url{liam.madden@colorado.edu}, Dept.\ of Applied Mathematics, University of Colorado Boulder}  \and Stephen Becker\thanks{\url{stephen.becker@colorado.edu}, Dept.\ of Applied Mathematics, University of Colorado Boulder}   \and  Emiliano Dall'Anese\thanks{\url{emiliano.dallanese@colorado.edu}, Dept.\ of Electrical, Computer and Energy Engineering, University of Colorado Boulder} }

\date{\today}

\maketitle

\begin{abstract}

This paper investigates online algorithms for smooth time-varying optimization problems, focusing first on methods with constant step-size, momentum, and extrapolation-length. Assuming strong convexity, precise results for the tracking iterate error (the limit supremum of the norm of the difference between the optimal solution and the iterates) for online gradient descent are derived. The paper then considers a general first-order framework, where a universal lower bound on the tracking iterate error is established. Furthermore, a method using ``long-steps'' is proposed and shown to achieve the lower bound up to a fixed constant. This method is then compared with online gradient descent for specific examples. Finally, the paper analyzes the effect of regularization when the cost is not strongly convex. With regularization, it is possible to achieve a non-regret bound. The paper ends by testing the accelerated and regularized methods on synthetic time-varying least-squares and logistic regression problems, respectively.

\end{abstract}

\section{Introduction}
Modern optimization applications have increased in scale and complexity~\cite{SPMag}, and furthermore, some applications require solutions to a series of problems with low latency. For example, in contrast to legacy power distribution systems that were built for constant and unidirectional flow, modern power systems that include solar power at the residential nodes must incorporate variable and bidirectional flow. Thus, in order to preserve efficiency, control decisions, solved via optimization, need to be made frequently, at the time scale of changing renewable generation and non-controllable loads (e.g., seconds). Making the problem even harder is that the problem must be solved for a greater number of control points, and so it takes longer to find a suitable solution. In this example, and many others, batch algorithms take longer to find a suitable solution than is allowed, and so we have to abandon them in favor of online algorithms. Motivated by applications requiring a time-varying framework, such as power systems \cite{opf,Tang}, transportation systems \cite{Bianchin}, and communication networks~\cite{Chen12}, this paper evaluates such online algorithms.

In particular, we consider online first-order algorithms. If it takes time $h>0$ to make one gradient call (or one gradient call and one evaluation of the proximal operator), then we encode this into the time-varying problem by discretizing the cost function with respect to $h$, leading to a sequence of cost functions. The goal of an online algorithm is to track the minimum of this sequence, taking only one step at each time.

In the presence of strong convexity, upper bounds for online gradient descent have been proved in, e.g.,  \cite{Emiliano,Andrea}. \rev{For completeness, we include such an upper bound in Theorem \ref{thm:stronglyConvex1}. Furthermore,} for the first time it is established (Theorem~\ref{thm:stronglyConvex2}) that this is a tight bound. Beyond online gradient descent, there are dynamic regret bounds for online accelerated methods, but these are not shown to be optimal \cite{Yang}. This paper goes further. First, Theorem \ref{thm:firstorder} proves a lower bound for online first-order methods in general. Then, we define a method that we call online long-step Nesterov's method, and prove a proportional upper bound for it in Theorem~\ref{thm:onlineNesterov}. In the absence of strong convexity, we show, in Theorem~\ref{thm:onlineRegularization}, that regularizing online gradient descent leads to a useful bound that is not in terms of the regret. Finally, this paper demonstrates the performance of the algorithms by applying them to synthetic time-varying least squares and logistic regression problems.

The rest of the paper is organized as follows. Section \ref{sec:preliminaries} provides all the necessary preliminaries. Section \ref{sec:gradient} considers online gradient descent, online Polyak's method, and online Nesterov's method. Section \ref{sec:Nesterov} considers the full class of online first-order methods, including online long-step Nesterov's method. Section \ref{sec:regularization} details the results for online regularized gradient descent. Section \ref{sec:numerics} demonstrates the performance of the algorithms on some numerical examples. Section \ref{sec:conclusion} concludes the paper.

\section{Preliminaries} \label{sec:preliminaries}

This section reviews useful results from convex analysis~\cite{Beck,Combettes,Nesterov,Nesterov2,Boyd,Bubeck}, and introduces key definitions that will be used throughout the paper. The paper considers functions on a Hilbert space $\Hi$ with corresponding norm $\|\cdot\|$. We assume all functions, $f_t: \Hi \rightarrow \R$,  are  proper, lower semicontinuous, convex, and strongly smooth; $t\in\N\cup\{0\}$ denotes the time index~\cite{SPMag,popkov2005gradient,SimonettoGlobalsip2014,mokhtari2016online}. The discretized time-varying optimization problem of interest is the sequence of convex problems (one for each $t$): 
\begin{align}
\label{eq:time-varying-opt}
   \min_{x\in\Hi} f_t(x) \,, \hspace{.5cm} t \in\N\cup\{0\}
\end{align}
along with a time-varying minimizer set. In particular, assume that the minimizer set, denoted as $X_t\opt$, is nonempty for each $t$. Let $x_t\opt$ denote an element of $X_t\opt$ and $f_t\opt=f_t(x_t\opt)$. 
We denote the set of sequences of $L$-strongly smooth functions by $\SS'(L)$. For function sequences that are additionally $\mu$-strongly convex, $X_t\opt$ contains only one point; therefore, we can measure the temporal variability of the optimal solution trajectory of~\eqref{eq:time-varying-opt} with the sequence 
\begin{align*}
    \sigma_t :=\|x_{t+1}\opt-x_t\opt\| , \hspace{.5cm} t\in \N\cup\{0\}.
\end{align*}
We define $\SS(\kappa^{-1},L,\sigma)$, where $\kappa$ is the condition number, as the set of $L$-strongly smooth, $\kappa^{-1}L$-strongly convex functions sequences with $\sigma_t \leq \sigma$ for all $t$. For both $\SS'(L)$ and $\SS(\kappa^{-1},L,\sigma)$, the Hilbert space, along with its dimension, is left implicit since the results are independent of it.

Throughout the paper, we will consider various measures of optimality~\cite{popkov2005gradient,SimonettoGlobalsip2014,mokhtari2016online,SPMag,hall2015online,Jadbabaie2015,besbes2015non,yi2016tracking,Emiliano}: the iterate error $\|x_t-x_t\opt\|$, the function error $f_t(x_t)-f_t\opt$, and the gradient error $\|\nabla f_t(x_t)\|$. For functions that are not strongly convex, the iterate error yields the  strongest results, while the gradient error is the weakest. That is,
\begin{align}
 \|\nabla f_t(x) \| &\leq L \|x-x_t\opt\|,\label{eq:gradientBound} \\
 f_t(x) - f_t\opt &\leq \frac{L}{2}\|x-x_t\opt\|^2,\\
 \text{and }\|\nabla f_t(x) \|^2 &\leq 2L \left( f_t(x) - f_t\opt\right)
\end{align}
where the first two inequalities follow from standard arguments in convex analysis, \rev{and the third inequality follows by comparing a point and a gradient step from it in the definition of strong smoothness \cite[Sec. 12.1.3]{Shai}}. For functions that are strongly  convex, bounds in the opposite directions can be found.

Due to the temporal variability, we cannot expect any of the error sequences to converge to zero in general. Thus, we will characterize the performance of online algorithms via bounds on the limit supremum of the errors, which we term ``tracking'' error, rather than bounds on the convergence rate (to the tracking error ball).

\rev{Note that for $\SS'(L)$, the regret literature uses the dynamic regret, $Reg_T\coloneqq \sum_{t=0}^T f_t(x_t)-f_t\opt$, as a measure of optimality~\cite{besbes2015non,hall2015online,yi2016tracking,Jadbabaie2015}. The path variation, function variation, and gradient variation---respectively,
\begin{align*}
    V_T^p &= \max_{\{x_t\opt\in X_t\opt\}_{t=0}^T}\sum_{t=1}^T \|x_t\opt-x_{t-1}\opt\|, \\
    V_T^f &= \sum_{t=1}^T\sup_x |f_t(x)-f_{t-1}(x)|, \\
\text{and}\;
  V_T^g &= \sum_{t=1}^T \sup_x \|\nabla f_t(x)-\nabla f_{t-1}(x)\|
\end{align*}
---are used to bound the dynamic regret. On the other hand, our approach to $\SS'(L)$ is inspired by regularization reduction \cite{ZhuReduction}. Through regularization, we are able to bound the tracking gradient error via the $(\sigma_t)$ corresponding to the regularized problem. But, there are a couple of reasons why our bound cannot be compared with the bounds in the regret literature. First, it is possible for $Reg_T/(T+1)$ to be bounded while the function error has a subsequence going to infinity. For example, let $(\Delta_t)_{t\in\N\cup\{0\}}=(1,0,2,0,0,3,0,0,0,\ldots)$. Then $\frac{1}{T+1}\sum_{t=0}^T \Delta_t\leq 1$ even though there is a subsequence of $(\Delta_t)$ that goes to infinity. Second, in order for the function variation or gradient variation to be finite, the constraint set must be compact. Our bound only requires that the optimal solution trajectory lie in a compact set.}

In this paper, we consider three general algorithms: $\ALG(\alpha,\beta,\eta)$ presented in Section~\ref{sec:gradient}, online long-step Nesterov's method ($\OLNM(T)$) presented in Section~\ref{sec:Nesterov}, and online regularized gradient descent ($\ORGD(\delta,x_c)$), presented in Section~\ref{sec:regularization}. $\ALG(\alpha,\beta,\eta)$ encompasses methods such as online gradient descent, online Polyak's method (also known as the online heavy ball method), and online Nesterov's method as special cases (see Table~\ref{table:ALG}).

The proposed $\OLNM$ is motivated by the following observation: suppose that the optimizer has access to all the previous functions; depending on the temporal variability of the problem,  the question we pose is whether  it is better to use the same function for some number of iterations before switching to a new one, or utilize a new function at each iteration. This question is relevant especially in the case where ``observing'' a new function may incur a cost (for example, in sensor networks, where acquisition of new data may be costly from a battery and data transmission standpoint).  We call the former idea ``long-steps,'' drawing an analogy to interior-point methods \cite[Sec. 14]{Nocedal},~\cite[Sec. 11]{Boyd}. Specifically, we pause at a function for some number of iterations, and we apply a first-order algorithm with the optimal coefficients for the number of iterations, in the sense of \cite{Drori,Kim,Taylor}. Thus, short-steps are online gradient steps. Restarting Nesterov's method has been shown to be optimal for strongly convex functions in, e.g. \cite{Candes},~\cite[App. D]{ZhuUnification}. It turns out that the optimal restart count as a long-step length is optimal for the strongly convex time-varying problem.

On the other hand, the literature on batch algorithms with inexact gradient calls has shown that accelerated methods accumulate errors for some non-strongly convex functions \cite[Prop. 2]{Schmidt},~\cite[Thm. 7]{Devolder},~\cite{Bertsekas,Villa,Aujol}. This suggests that there may be some non-strongly convex function sequences such that all online accelerated methods perform worse than online gradient descent. On the other hand, only averaged function error bounds exist for online gradient descent. We get around this via regularization; in particular, we balance the error introduced by the regularization term with a smaller tracking error achieved by the regularized algorithm. This allows us to derive a non-averaged error bound for $\ORGD$.

While the paper considers strongly smooth functions for simplicity of exposition, we note how to extend results to the composite case (where the cost function is the sum of $f$ and a possibly nondifferentiable function with computable proximal operator, such as an indicator function encoding constraints or an $\ell_1$ penalty). We leave the analysis for Banach spaces as a future direction. See \cite[Sec. 3.1]{Tseng} for an excellent treatment of acceleration for Banach spaces.

\section{Simple first-order methods} \label{sec:gradient}

In this section, we focus on the class of algorithms that, given $x_0$, construct $(x_t)$ via:
\begin{align}
    x_{t+1} &= x_t-\alpha\nabla f_t(y_t)+\beta(x_t-x_{t-1}) \tag{$\ALG(\alpha,\beta,\eta)$}\\
    y_{t+1} &= x_{t+1}+\eta(x_{t+1}-x_{t})\notag
\end{align}
which will be referred to as $\ALG(\alpha,\beta,\eta)$, where $\alpha > 0$ is the step-size, $\beta$ is the momentum, and $\eta$ is the extrapolation-length. $\ALG(\alpha,0,0)$ corresponds to online gradient descent, $\ALG(\alpha,\beta,0)$ to an online version of Polyak's method \cite{Polyak}, and $\ALG(\alpha,\beta,\beta)$ to an online version of Nesterov's method \cite{Nesterov2}. 

\begin{table}
\begin{tabular}{lll}
\toprule
Form of $\ALG(\alpha,\beta,\eta)$ & Typical parameter choice & Name \\
\midrule
$\ALG(\alpha,0,0)$ & $0 < \alpha<\frac{2}{L}$ & Online gradient descent  \\
 $\ALG(\alpha,\beta,0)$ & $\alpha=\frac{4}{\left(\sqrt{L}+\sqrt{\mu}\right)^2}$, $\beta=\left(\frac{\sqrt{L}-\sqrt{\mu}}{\sqrt{L}+\sqrt{\mu}}\right)^2$ & Online Polyak's method \cite{Polyak}  \\
  $\ALG(\alpha,\beta,\beta)$ & $\alpha=1/L$,  $\beta = \eta= \frac{\sqrt{L}-\sqrt{\mu}}{\sqrt{L}+\sqrt{\mu}}$ & Online Nesterov's method \cite{Nesterov2} \\
\bottomrule     
\end{tabular}
\caption{Special cases of $\ALG(\alpha,\beta,\eta)$
for $(f_t)\in \SS(\kappa^{-1},L,\sigma)$ where $\mu = L/\kappa$.
}
\label{table:ALG}
\end{table}

It should be pointed out that, while the analysis of online algorithms for time-varying optimization such as $\ALG(\alpha,\beta,\eta)$ share commonalities with online learning~\cite{Hazan2007,hall2015online,Jadbabaie2015}, the two differ in their motivations and aspects of their implementations, as noted in~\cite{SPMag}. For example, in online learning frameworks, the step size may depend on the time-horizon or, in the case of an infinite time-horizon, a doubling-trick may be utilized \rev{\cite[Sec. 2.3.1]{Shalev}}. On the other hand, every step of an online algorithm applied to a time-varying problem is essentially the \textit{first} step at that time. Hence, the parameters of the algorithm should be cyclical or depend on measurements of the temporal variation. We only consider the former. The simplest subset of cyclical algorithms is the set of algorithms with constant parameters, as in $\ALG$.

In the following subsections, we give a tight bound on the performance of online gradient descent and analyze $\ALG$ for two examples.

\subsection{Online gradient descent}

When the function $f_t$ is strongly convex for all $t$, upper bounds on the tracking errors for online gradient descent are available in  the literature (see, e.g.,~\cite{Emiliano,Andrea,Liam,dixit2019online}); these results are tailored to the setting considered in this paper by the following theorem, which is followed by two examples used to give intuition and prove tightness results.

\begin{theorem} \label{thm:stronglyConvex1}
Suppose that $(f_t)\in \SS(\kappa^{-1},L,\sigma)$ and let \rev{$\alpha\in ]0,2/(\mu+L)]$}. Then, given $x_0$, $\ALG(\alpha,0,0)$  constructs a sequence $(x_t)$ such that
\begin{align}
    \label{eq:linear_OGD}
    \limsup_{t\to\infty}\|x_t-x_t\opt\| \leq (\alpha\mu)^{-1}\sigma
\end{align}
where $\mu=\kappa^{-1}L$. In particular, the bound is minimized for $\alpha=\frac{2}{\mu+L}$, in which case,
\begin{align}
    \limsup_{t\to\infty}\|x_t-x_t\opt\| \leq \frac{\kappa+1}{2}\sigma.
\end{align}
\end{theorem}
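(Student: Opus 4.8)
The plan is to use the standard contraction property of the gradient step for strongly convex, strongly smooth functions, and then to track how the moving minimizer perturbs the iterate across each time step. First I would recall the classical fact that, for an $L$-strongly smooth and $\mu$-strongly convex function $f_t$, the map $x \mapsto x - \alpha \nabla f_t(x)$ is a contraction toward $x_t\opt$ whenever $\alpha \in \,]0, 2/(\mu+L)]$; specifically $\|x - \alpha\nabla f_t(x) - x_t\opt\| \le \rho\|x - x_t\opt\|$ where $\rho = \max\{|1-\alpha\mu|, |1-\alpha L|\} = 1 - \alpha\mu$ for $\alpha$ in that range. Since $\nabla f_t(x_t\opt) = 0$, applying one step of $\ALG(\alpha,0,0)$ gives $\|x_{t+1} - x_t\opt\| \le \rho \|x_t - x_t\opt\|$.

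The second step is to pass from $x_t\opt$ to $x_{t+1}\opt$ by a triangle inequality: $\|x_{t+1} - x_{t+1}\opt\| \le \|x_{t+1} - x_t\opt\| + \|x_t\opt - x_{t+1}\opt\| \le \rho\|x_t - x_t\opt\| + \sigma_t \le \rho\|x_t - x_t\opt\| + \sigma$. Writing $e_t := \|x_t - x_t\opt\|$, this is the linear recursion $e_{t+1} \le \rho e_t + \sigma$ with $\rho = 1 - \alpha\mu \in [0,1[$. Unrolling gives $e_{t+1} \le \rho^{t+1} e_0 + \sigma \sum_{k=0}^{t} \rho^k \le \rho^{t+1} e_0 + \sigma/(1-\rho)$, so taking $\limsup$ as $t\to\infty$ kills the transient term and yields $\limsup_t e_t \le \sigma/(1-\rho) = \sigma/(\alpha\mu) = (\alpha\mu)^{-1}\sigma$, which is \eqref{eq:linear_OGD}.

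For the ``in particular'' claim, I would simply minimize $(\alpha\mu)^{-1}\sigma$ over the admissible interval $\alpha \in \,]0, 2/(\mu+L)]$; since this is decreasing in $\alpha$, the optimum is the right endpoint $\alpha = 2/(\mu+L)$, giving the bound $\frac{\mu+L}{2\mu}\sigma = \frac{1 + L/\mu}{2}\sigma = \frac{\kappa+1}{2}\sigma$, using $\kappa = L/\mu$. One subtlety worth flagging: the contraction factor equals $1-\alpha\mu$ only on the stated interval $]0, 2/(\mu+L)]$; on the larger stability interval $]0, 2/L[$ one instead gets $\rho = \max\{1-\alpha\mu, \alpha L - 1\}$, and the clean bound $(\alpha\mu)^{-1}\sigma$ would need to be replaced by $\sigma/(1-\rho)$. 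This is why the theorem restricts to $\alpha \le 2/(\mu+L)$, and I would state the contraction lemma carefully to make that restriction do its work.

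The main obstacle is essentially bookkeeping rather than conceptual: the only real content is the contraction lemma, which is standard (it follows, e.g., from co-coercivity of $\nabla f_t$ combined with strong convexity, or from the classical two-sided bound on $\langle \nabla f_t(x) - \nabla f_t(y), x - y\rangle$), so I would either cite it or include a one-line derivation. After that, everything reduces to the geometric-series estimate for the scalar recursion $e_{t+1}\le \rho e_t + \sigma$ and the elementary optimization over $\alpha$, both of which are routine.
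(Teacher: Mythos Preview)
Your proposal is correct and follows exactly the approach the paper sketches: use that $I-\alpha\nabla f_t$ is a contraction with factor $\max\{|1-\alpha\mu|,|1-\alpha L|\}=1-\alpha\mu$ on the stated interval, combine with the fixed-point property of $x_t\opt$ and a triangle inequality to get the scalar recursion $e_{t+1}\le(1-\alpha\mu)e_t+\sigma$, and then pass to the $\limsup$. The paper only gives this as a one-line proof sketch, so your write-up is in fact more detailed than what appears there.
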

The proof follows by using the fact that the map $I-\alpha\nabla f_t$ is Lipschitz continuous with parameter $c = \max\{|1-\alpha\mu|,|1-\alpha L|\}$ \cite[Sec. 5.1]{BoydPrimer} and $x_t\opt$ is a fixed point of that map. The result straightforwardly extends to the composite case by using the nonexpansiveness property of the proximal operator and a similar fixed point result.

\subsection{Example: translating quadratic}

For quadratic functions with constant positive definite matrix but minimizer moving at constant speed on a straight line, the analysis of \cite{Lessard} can be extended, using the Neumann series, to show that the set of $(\alpha,\beta,\eta)$ such that $\ALG$ has a finite worst-case tracking iterate error is exactly the stability set of $(\alpha,\beta,\eta)$ in the batch setting (i.e., in a setting where the cost does not change during the execution of the algorithm). As an example, the stability set for online Nesterov's method is given in \cite[Prop. 3.6]{Aybat}. Thus, in the online setting, one should consider  only $(\alpha,\beta,\eta)$ that are in the batch stability set. 

\newcommand{\DD}{\xi}
\rev{As a particular example, let $f(x)=\frac{1}{2}x^TAx$, with $A=\text{diag}(\mu,L,L,...)$. Given $(\alpha,\beta,\eta)$ and initialization $x_0$, we want to construct $(f_t)\in\SS(\mu/L,L,\sigma)$ in such a way that the iterates of $\ALG(\alpha,\beta,\eta)$ trail behind $(x_t\opt)$ at a constant distance. Towards this end, define $\DD=\left(\frac{1-\beta}{\alpha\mu}+\eta\right)\sigma$, which will end up being the trailing distance. Let $e$ denote the first canonical basis vector, and define $f_t=f(\cdot-x_t\opt)$ where $x_t\opt=x_0+(t\sigma +\DD)e$. By induction, we will show that $\Delta_t \coloneqq x_t-x_t\opt=-\DD e~\forall t\in\N\cup\{0\}$.} The base case follows by construction. 
Now, assume the result holds for all 
\newcommand{\kk}{t}
$t'\leq \kk$. Then,
\begin{align*}
    \Delta_{\kk+1} &= (1+\beta)x_{\kk}-\beta x_{\kk-1}-x_{\kk+1}\opt-\alpha\nabla f_{\kk}\big((1+\eta)x_{\kk}-\eta x_{\kk-1}\big)\\
    &= (1+\beta)\Delta_{\kk}-\beta\Delta_{\kk-1}+(1+\beta)(x_{\kk}\opt-x_{\kk+1}\opt)-\beta(x_{\kk-1}\opt-x_{\kk+1}\opt)\\
    &\hspace{1cm}-\alpha\nabla f\left((1+\eta)\Delta_{\kk}-\eta \Delta_{\kk-1}-\eta(x_{\kk-1}\opt-x_{\kk}\opt)\right)\\
    &= -(1+\beta)\DD e+\beta \DD e-(1+\beta)\sigma e+2\beta\sigma e\\
    &\hspace{1cm}-\alpha\nabla f\left(-(1+\eta)\DD e+\eta \DD e+\eta \sigma e\right)\\
    &= -\DD e - (1-\beta)\sigma e - \alpha \nabla f(-\DD e+\eta\sigma e)\\
    &= -\DD e +\alpha\mu \DD e-(1-\beta+\alpha\mu\eta)\sigma e\\
    &= -\DD e
\end{align*}
and so the result holds for all $t$ by induction. Figure \ref{fig:diagram3} shows how the iterates trail behind the minimizers.


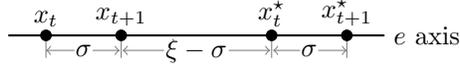
\begin{figure}[H]
\centering
\begin{tikzpicture}
\centering
\draw[black, thick] (-2.5,0) -- (2.5,0);
\foreach \x in {-2,-1,1,2}
    \draw[gray] (\x,0) -- (\x,-.3);
\filldraw[black] (-2,0) circle (2pt) node[anchor=south] {$x_t$};
\draw[<-,gray] (-2,-.2) -- (-1.62,-.2);
\draw[->,gray] (-1.38,-.2) -- (-1.02,-.2);
\draw[<-,gray] (2,-.2) -- (1.62,-.2);
\draw[->,gray] (1.38,-.2) -- (1.02,-.2);
\draw[<-,gray] (-.98,-.2) -- (-.43,-.2);
\draw[<-,gray] (.98,-.2) -- (.43,-.2);
\filldraw[black] (-1,0) circle (2pt) node[anchor=south] {$x_{t+1}$};
\filldraw[black] (1,0) circle (2pt) node[anchor=south] {$x\opt_t$};
\filldraw[black] (2,0) circle (2pt) node[anchor=south] {$x\opt_{t+1}$};
\node[] at (-1.5,-.2) {$\sigma$};
\node[] at (0,-.2) {$\DD-\sigma$};
\node[] at (1.5,-.2) {$\sigma$};
\node[anchor=west] at (2.5,0) {\rev{$e$} axis};
\end{tikzpicture}
\caption{Movement of iterates and minimizers}
\label{fig:diagram3}
\end{figure}

\noindent
For online gradient descent, we have $\DD=(\alpha\mu)^{-1}\sigma$; this  shows that the bound in Theorem \ref{thm:stronglyConvex1} is tight. We formalize this  tightness result (which is a contribution of the present paper) in Theorem \ref{thm:stronglyConvex2}.

\begin{theorem} \label{thm:stronglyConvex2}
For all \rev{$\alpha\in]0,2/(\mu+L)]$} and initialization $x_0$, $\exists(f_t)\in\\ \SS(\mu/L,L,\sigma)$ such that $\ALG(\alpha,0,0)$  constructs a sequence $(x_t)$ with
\begin{align}
    \label{eq:lowerbound_OGD}
    \limsup_{t\to\infty}\|x_t-x_t\opt\| = (\alpha\mu)^{-1}\sigma.
\end{align}
\end{theorem}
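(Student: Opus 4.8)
The plan is to read the claim off the translating-quadratic example just established, specialized to $\beta=\eta=0$ (so that $\ALG(\alpha,\beta,\eta)$ is online gradient descent). Fix $\alpha\in]0,2/(\mu+L)]$ and $x_0$. In the notation of the example, the trailing distance becomes $\xi=\big(\tfrac{1-\beta}{\alpha\mu}+\eta\big)\sigma=(\alpha\mu)^{-1}\sigma$ when $\beta=\eta=0$. Take $A=\text{diag}(\mu,L,L,\dots)$, $f(x)=\tfrac12 x^T A x$, $x_t\opt=x_0+(t\sigma+\xi)e$ with $e$ the first canonical basis vector, and $f_t=f(\cdot-x_t\opt)$.

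First I would verify $(f_t)\in\SS(\mu/L,L,\sigma)$: each $f_t$ is a quadratic with Hessian $A$, hence $L$-strongly smooth and $\mu=\kappa^{-1}L$-strongly convex with $\kappa=L/\mu$ (both constants being attained along the $\mu$- and $L$-eigendirections of $A$), and $\sigma_t=\|x_{t+1}\opt-x_t\opt\|=\|\sigma e\|=\sigma$ for all $t$; the ambient dimension is immaterial since the class is defined with the Hilbert space left implicit (Section~\ref{sec:preliminaries}). Then I would invoke the induction already performed in the example: with $\beta=\eta=0$ all the momentum and extrapolation terms drop, and it collapses to $\Delta_0\coloneqq x_0-x_0\opt=-\xi e$ together with $\Delta_{t+1}=-\xi e+\alpha\mu\,\xi e-\sigma e=-\xi e$, the last step using $\alpha\mu\,\xi=\sigma$. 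Thus $\Delta_t=-\xi e$ for every $t\in\N\cup\{0\}$, so $\|x_t-x_t\opt\|=\xi$ is constant in $t$ and $\limsup_{t\to\infty}\|x_t-x_t\opt\|=\xi=(\alpha\mu)^{-1}\sigma$, which is~\eqref{eq:lowerbound_OGD}.

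Because the construction and its induction are already available, there is no genuine obstacle; the only points requiring care are bookkeeping. One is checking that the shifted quadratic really lies in $\SS(\mu/L,L,\sigma)$ — it suffices to have one coordinate of curvature exactly $\mu$ (this is what makes $\alpha\mu\,\xi=\sigma$ close the recursion) together with one of curvature $L$. The other is confirming that $\xi=(\alpha\mu)^{-1}\sigma$ is admissible throughout $\alpha\in]0,2/(\mu+L)]$, which it is, since closing the recursion needs only $\alpha,\mu>0$; indeed the same construction produces a constant iterate error $(\alpha\mu)^{-1}\sigma$ for any $\alpha>0$, though only on $]0,2/(\mu+L)]$ does this value coincide with the upper bound of Theorem~\ref{thm:stronglyConvex1}.
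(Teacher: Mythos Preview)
Your proposal is correct and follows essentially the same approach as the paper: the paper simply states that specializing the translating-quadratic construction to $\beta=\eta=0$ gives $\xi=(\alpha\mu)^{-1}\sigma$, which establishes tightness of Theorem~\ref{thm:stronglyConvex1}. You have filled in the bookkeeping (class membership of $(f_t)$ and the collapsed induction step) that the paper leaves implicit, but the argument is the same.
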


For Polyak's method, using the usual parameters, one gets $\rev{\DD}=\sqrt{\kappa}\sigma$. On the other hand, for Nesterov's method one gets $\rev{\DD}=\left(2\sqrt{\kappa}-1\right)\sigma$. Thus, when applied to this example, the tracking iterate error scales with the square root of the condition number for both of these methods. However, as we will see in the next example, this is not always the case for these methods.

\subsection{Example: rotating quadratic}
Consider the function  $f$ utilized in the previous example, and consider rotating the first two canonical basis directions every iteration. We can reduce the full problem to one in $\R^2$. Define
\begin{align*}
f_t(x)&=\frac{1}{2}x^T A_t x\\
A_{2t}&=\begin{pmatrix} L&0\\0&\mu \end{pmatrix}\\
A_{2t+1}&=\begin{pmatrix} \mu&0\\0&L \end{pmatrix}
\end{align*}
and note that $(f_t)\in \SS(\mu/L,L,\sigma)$ for all $\sigma\geq 0$. Now, let $(x_t)$ be the sequence generated by $\ALG(\alpha,\beta,\eta)$ given $x_0$. Denote $a_+=(1+\beta)-(1+\eta)\alpha L$, $a_-=(1+\beta)-(1+\eta)\alpha \mu$,  $b_+=-\beta+\eta\alpha L$, and $b_-=-\beta+\eta\alpha \mu$. Then,
\begin{align*}
x_{2t} &= \begin{pmatrix}a_-&0\\0&a_+ \end{pmatrix}x_{2t-1}+ \begin{pmatrix}b_-&0\\0&b_+\end{pmatrix}x_{2(t-1)}\\
x_{2t+1} &= \begin{pmatrix}a_+&0\\0&a_- \end{pmatrix}x_{2t}+ \begin{pmatrix}b_+&0\\0&b_-\end{pmatrix}x_{2t-1}\\
&= \begin{pmatrix}a_+a_-+b_+&0\\0&a_+a_-+b_- \end{pmatrix}x_{2t-1}+ \begin{pmatrix}a_+b_-&0\\0&a_-b_+\end{pmatrix}x_{2(t-1)}.
\end{align*}
Define $z_t=[x_{2t};~x_{2t+1}]$. Then,
\begin{align*}
z_t &= \begin{pmatrix}
b_-&0&a_-&0\\0&b_+&0&a_+\\a_+b_-&0&a_+a_-+b_+&0\\0&a_-b_+&0&a_+a_-+b_-
\end{pmatrix}
z_{t-1}\\
&\coloneqq C z_{t-1}\\
&= C^t z_0.
\end{align*}
Thus, $z_t\to z\opt=0$ precisely when $\rho(C)<1$ (since $\rho(C)=\lim_t \|C^t\|^{1/t}$). It is easy to see that $C$ is similar to the block-diagonal matrix with $D$ and $E$ on the diagonal blocks where
\begin{align*}
D=\begin{pmatrix}
b_-&a_- \\  a_+b_- & \; a_+a_-+b_+
\end{pmatrix}
\text{ and }
E=\begin{pmatrix}
b_+&a_+ \\  a_-b_+& \; a_+a_-+b_-.
\end{pmatrix}
\end{align*}
Furthermore, $D$ and $E$ have the same trace and determinant:
\begin{align*}
\text{tr}(D)=\text{tr}(E)&= a_+a_-+b_++b_-\\
\text{det}(D)=\text{det}(E)
&= a_+a_-b_-+b_+b_--a_+a_-b_- 
= b_+b_-.
\end{align*}
Thus, $\rho(C)=\rho(D)=\rho(E)$. Note that $\rho(C)=0$ when $\alpha=\frac{1}{L}$ and $\beta=\eta=0$. In fact, $\ALG(1/L,0,0)$ converges exactly in just two steps. Thus, online gradient descent performs better than online Polyak's method and online Nesterov's method for the rotating quadratic example. In fact, online Polyak's method actually \textit{diverges}. For the usual parameters of Polyak's method, $\rho(C)=6\left(\frac{\sqrt{\kappa}-1}{\sqrt{\kappa}+1}\right)^2$, which is bigger than 1 precisely when $\kappa>\left(\frac{\sqrt{6}+1}{\sqrt{6}-1}\right)^2\approx 5.7$. We state this more formally in Theorem \ref{thm:polyak}.

\begin{theorem}
\label{thm:polyak}
For all $\kappa\geq 6$, $L>0$, $\sigma\geq 0$, $\exists(f_t)\in \SS(\kappa^{-1},L,\sigma)$, such that online Polyak's method diverges (for any non-optimal initialization $x_0$).
\end{theorem}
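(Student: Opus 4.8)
The plan is to take $(f_t)$ to be exactly the rotating quadratic of the preceding example. Since every $f_t$ is minimized at the origin, $\sigma_t=0$ for all $t$, so $(f_t)\in\SS(\kappa^{-1},L,\sigma)$ for \emph{every} $\sigma\geq0$, the unique minimizer is $x_t\opt=0$, and ``non-optimal $x_0$'' just means $x_0\neq0$. I would then reuse the reduction already carried out: online Polyak's method (i.e.\ $\eta=0$ with the $\alpha,\beta$ of Table~\ref{table:ALG}) produces iterates whose first and second coordinates evolve independently, so that after grouping them the dynamics become block-diagonal with the blocks $D$ and $E$ from that reduction, and $(x_{2t}^{(1)},x_{2t+1}^{(1)})^\top=D^t(x_0^{(1)},x_1^{(1)})^\top$, $(x_{2t}^{(2)},x_{2t+1}^{(2)})^\top=E^t(x_0^{(2)},x_1^{(2)})^\top$.

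Next I would pin down the spectra of $D$ and $E$. Substituting the Polyak parameters (with $\eta=0$) into $a_\pm,b_\pm$ gives $a_+=-2\sqrt\beta$, $a_-=2\sqrt\beta$, and $b_+=b_-=-\beta$, so $D$ and $E$ share the characteristic polynomial $\lambda^2-\mathrm{tr}(D)\,\lambda+\beta^2$ (recall $\det(D)=\det(E)=b_+b_-=\beta^2$). By the computation above, $\rho(C)>1$ for $\kappa\geq6$, i.e.\ at least one root of this quadratic has modulus strictly greater than $1$; since the product of its two roots is $\beta^2<1$, exactly one does, and hence $D$ (and likewise $E$) is diagonalizable with a one-dimensional stable eigenspace. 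So $x_t\to0$ would force $(x_0^{(1)},x_1^{(1)})$ into the stable eigenspace of $D$ \emph{and} $(x_0^{(2)},x_1^{(2)})$ into that of $E$.

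The heart of the argument is ruling this out for every $x_0\neq0$. An elementary eigenvector computation shows the stable eigenvectors of $D$ and of $E$ have the form $(1,\theta)$ and $(1,-\theta)$ with $\theta=(\sqrt2-1)\sqrt\beta>0$ --- nonzero slopes of opposite sign. On the other hand, with the usual start $x_{-1}=x_0$ one has $x_1=(I-\alpha A_0)x_0$, so $(x_0^{(1)},x_1^{(1)})=x_0^{(1)}\,(1,\,1-\alpha L)$ and $(x_0^{(2)},x_1^{(2)})=x_0^{(2)}\,(1,\,1-\alpha\mu)$, and $\alpha L>1>\alpha\mu$ for every $\kappa>1$, so these two directions have a negative and a positive slope respectively. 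Matching signs, $(x_0^{(1)},x_1^{(1)})$ can lie in the stable eigenspace of $D$ only if $x_0^{(1)}=0$, and $(x_0^{(2)},x_1^{(2)})$ in that of $E$ only if $x_0^{(2)}=0$. Hence for any $x_0\neq0$ at least one coordinate sequence keeps a nonzero component along an eigenvector of modulus $>1$, so $\sup_t|x_t^{(i)}|=\infty$ for some $i\in\{1,2\}$ and therefore $\limsup_{t\to\infty}\|x_t-x_t\opt\|=\infty$: online Polyak's method diverges.

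The genuinely delicate point, I expect, is exactly this quantifier ``for any non-optimal $x_0$'': spectral radius $>1$ by itself only gives divergence off a measure-zero set, and it is the opposite-sign-slope comparison that promotes it to all $x_0\neq0$. (The check uses the convention $x_{-1}=x_0$ for initializing the momentum term; any other fixed initialization would require an analogous, equally elementary, sign comparison.)
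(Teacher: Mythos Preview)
Your approach is the paper's approach: take the rotating quadratic (so $\sigma_t\equiv0$ and $(f_t)\in\SS(\kappa^{-1},L,\sigma)$ for every $\sigma\ge0$), reduce the two-step recursion to the $4\times4$ companion matrix $C$, and use that for the Polyak parameters $\rho(C)>1$ once $\kappa\ge6$. The paper's own argument stops there --- it records $\rho(C)=6\beta$ and states the theorem --- so on the core construction you are aligned.

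Where you go beyond the paper is exactly where you flag the delicate point: the quantifier ``for any non-optimal $x_0$''. The paper's sentence ``$z_t\to0$ precisely when $\rho(C)<1$'' is loose in the direction you need, since $\rho(C)>1$ by itself only forces divergence off the stable subspace. Your fix --- computing the stable eigenvectors $(1,\pm\theta)$ of $D$ and $E$ with $\theta=(\sqrt2-1)\sqrt\beta>0$, and comparing their slope signs to those of $(1,1-\alpha L)$ and $(1,1-\alpha\mu)$ forced by the first step with $x_{-1}=x_0$ --- is correct: $\alpha L>1>\alpha\mu$ gives slopes of the wrong sign in each block, so neither initial pair can lie in the corresponding stable line unless the associated coordinate of $x_0$ vanishes. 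That upgrades ``generic divergence'' to ``divergence for every $x_0\neq0$'', which is what the theorem asserts. Your parenthetical about the initialization convention is apt; the paper does not spell out $x_{-1}$, and your argument is written for the standard choice.
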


In the batch setting, decreasing the step-size increases robustness to noise \cite{Lessard,Cyrus}. Thus, it makes sense that in the online setting, decreasing the step-size leads to greater stability. In other words, online Polyak's method diverges because of its large step-size. 
By reducing the stepsize to 
$\alpha=\frac{1}{L}$, then $\eta\leq\beta<1$ is sufficient to guarantee $\rho(C)<1$.

\section{General first-order methods} \label{sec:Nesterov}

\rev{In \cite{Nemirovsky}, Nemirovsky and Yudin proved lower bounds on the number of first-order oracle calls necessary for $\epsilon$-convergence in both the smooth and convex setting and the smooth and strongly convex setting \cite[Thm. 7.2.6]{Nemirovsky}. Then, in \cite{nesterov1983}, Nesterov constructed a method that achieved the lower bound in the smooth and convex setting. His method has a momentum parameter that goes to one as the iteration count increases. By modifying the momentum sequence, it is possible to achieve the lower bound in the smooth and strongly convex setting as well. This can be done by either setting the momentum parameter to a particular constant, restarting the original method every time the iteration count reaches a particular number, or adaptively restarting the original method \cite{Candes}. 

Nesterov presents the lower bounds in Theorems 2.1.7 and 2.1.13 respectively of \cite{Nesterov2}. 
These theorems involve some subtleties, which we now discuss.
First, Nesterov says that $(x_t)$ comes from a first-order method if $x_{t+1}-x_0\in \text{span}\{\nabla f(x_0),...,\nabla f(x_t)\}$ for all $t$. This is the definition that we will generalize to the time-varying setting. Second, the lower bounds do not hold for all $t$. The lower bound in the smooth and convex setting only holds for $t<\frac{1}{2}(d-1)$ where $d$ is the dimension of the space. In the smooth and strongly convex setting, Nesterov only 
proves the bound
for infinite-dimensional spaces. In fact, for finite-dimensional spaces, the lower bound can only hold for $t\leq O(d)$ since the conjugate gradient method applied to quadratic functions converges exactly in $d$ iterations. In order to prove lower bounds that hold for all $t$ in finite-dimensional spaces, it is necessary to restrict to smaller classes of methods.
In particular, \cite{arjevani2016lower} excludes the conjugate gradient method by restricting to methods with ``stationary'' update rules. Third, the lower bounds are based on explicit adversarial functions. In particular, we will use the adversarial function that Nesterov gives in \cite[2.1.13]{Nesterov2} to construct an adversarial sequence of functions in the time-varying setting. Thus, our lower bound only holds for infinite-dimensional spaces. It is an open problem whether the function sequence can be modified to give a lower bound that holds for all $t$ in finite-dimensional spaces or whether it is necessary to restrict to smaller classes of methods.

We say that $(x_t)$ comes from a first-order method if $x_{t+1}-x_0\in\\ \text{span}\{\nabla f_{\tau_0}(x_0),...,\nabla f_{\tau_t}(x_t)\}$ where, for each $t$, $\tau_t\in \{0,...,t\}$.} More generally, we still consider $(x_t)$ to be from a first-order method if $(x_t)$ is a simple auxiliary sequence of some $(y_t)$ that is more precisely first-order. Now, calling the most recent gradient at each step of the algorithm, as $\ALG$ does, corresponds to $\tau_t = t$ for all $t$. While it is possible for an algorithm to call an older gradient, it is not clear how this could be helpful. In section \ref{subsec:OLNM} will show that having $\tau_t = T\lfloor t/T\rfloor$ for all $t$ makes it possible to build up momentum in the online setting.

\subsection{Universal lower bound}
In Theorem \ref{thm:firstorder} we give a generalization of Nesterov's lower bound for the online setting considered in this paper. In the proof, we omit certain details that can be found in \cite[Sec. 2.1.4]{Nesterov2}.

\begin{theorem} \label{thm:firstorder}
Let $\Hi=\ell_2(\N)$. For any $x_0$, $\exists (f_t)\in \SS(\kappa^{-1},L,\sigma)$ such that, if $(x_t)$ is generated by an online first-order method starting at $x_0$, then
\begin{align*}
    \|x_t-x_t\opt\| \geq \frac{\sqrt{\kappa}-1}{2}\sigma.
\end{align*}
\end{theorem}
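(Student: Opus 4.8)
The plan is to lift Nesterov's adversarial construction for smooth strongly convex minimization \cite[Sec.~2.1.4]{Nesterov2} to the time-varying setting, using a minimizer that drifts just fast enough to keep replenishing the ``coordinate'' obstruction that underlies his lower bound. First I would translate the problem so that $x_0=0$. Recall the static building block on $\Hi=\ell_2(\N)$: Nesterov's worst quadratic $\phi(x)=\tfrac{\mu(\kappa-1)}{8}\big(\langle \bar A x,x\rangle-2\langle e_1,x\rangle\big)+\tfrac{\mu}{2}\|x\|^2$, where $(e_i)$ is the standard basis and $\bar A$ has $2$ on the diagonal and $-1$ on the off-diagonals, with spectrum contained in $[0,4]$; this $\phi$ is $\mu$-strongly convex and $L$-smooth with $\mu=L/\kappa$, its minimizer is the geometric sequence $v$ with $v_i=q^i$, $q=\tfrac{\sqrt\kappa-1}{\sqrt\kappa+1}$, and — crucially — because $\bar A$ is tridiagonal, $\nabla\phi$ sends $\mathrm{span}\{e_1,\dots,e_k\}$ into $\mathrm{span}\{e_1,\dots,e_{k+1}\}$ while $\nabla\phi(0)\in\mathrm{span}\{e_1\}$.

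Next I would define $(f_t)$ by keeping the quadratic form $\bar A$ but moving the linear term one coordinate to the right each step: $f_t(x)=\tfrac{\mu(\kappa-1)}{8}\big(\langle \bar A x,x\rangle-2s\langle e_{t+1},x\rangle\big)+\tfrac{\mu}{2}\|x\|^2$, with a scale $s>0$ fixed below. Each $f_t$ lies in $\SS(\kappa^{-1},L,\sigma)$: it has the same Hessian as $\phi$, hence the same $\mu$ and $L$, and its minimizer $x_t\opt$ is a ``bump'' profile centered at coordinate $t+1$ (up to boundary effects, a two-sided geometric decay of ratio $q$ given by the Green's function of $\bar A+\tfrac{4}{\kappa-1}I$), translated by one index each step, so $\|x_{t+1}\opt-x_t\opt\|$ is a constant multiple of $s$ independent of $t$; I choose $s$ so that this constant equals $\sigma$. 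The key lemma is a trapping statement: any online first-order method started at $x_0=0$ has $x_t\in\mathrm{span}\{e_1,\dots,e_t\}$ for all $t$. This follows by induction from the structural fact above together with $\tau_t\le t$: the called gradient $\nabla f_{\tau_t}(x_t)$ equals the tridiagonal operator applied to $x_t$ minus a multiple of $e_{\tau_t+1}$, and $\tau_t+1\le t+1$, so it lies in $\mathrm{span}\{e_1,\dots,e_{t+1}\}$ — in particular, reusing an older gradient is never an advantage, since it only pulls the linear term's coordinate back. Consequently $\|x_t-x_t\opt\|\ge\|P_{>t}\,x_t\opt\|$, the norm of the part of the minimizer supported beyond coordinate $t$.

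It then remains to evaluate this tail norm and the per-step displacement in closed form from the Green's function and to check that their ratio is $\tfrac{\sqrt\kappa-1}{2}$, which gives the bound after the normalization $\|x_{t+1}\opt-x_t\opt\|=\sigma$. I expect this balancing to be the main obstacle: one must arrange the drifting profile and the scaling so that the minimizer never moves faster than $\sigma$ and yet a slice of size $\tfrac{\sqrt\kappa-1}{2}\sigma$ stays permanently just beyond the method's reachable subspace, all while preserving $\kappa$-conditioning and the one-new-coordinate-per-gradient property. Handling the boundary coordinate, and confirming — as in Nesterov's analysis — that infinite dimension is genuinely needed (so the statement cannot be claimed at every $t$ in finite dimension, where conjugate gradients would eventually close the gap), are the remaining technical points, for which I would defer to \cite[Sec.~2.1.4]{Nesterov2} as the authors indicate.
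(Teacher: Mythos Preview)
Your trapping lemma is correct, and the idea of sliding the linear term along the coordinate axis is natural, but the construction you propose does \emph{not} yield the constant $\tfrac{\sqrt\kappa-1}{2}$, so the proof as written will not close. With your $f_t$ the Hessian is the full tridiagonal operator for all $t$, so the minimizer $x_t\opt$ is the Green's function of $\bar A+\tfrac{4}{\kappa-1}I$ centered at coordinate $t+1$: in the bulk this is the two-sided geometric bump $x_t\opt(i)\propto \gamma^{|i-(t+1)|}$ with $\gamma=\tfrac{\sqrt\kappa-1}{\sqrt\kappa+1}$. A direct computation then gives, for large $t$,
\[
\frac{\|P_{>t}x_t\opt\|}{\|x_{t+1}\opt-x_t\opt\|}
=\sqrt{\frac{1/(1-\gamma^2)}{2(1-\gamma)/(1+\gamma)}}
=\frac{1}{\sqrt 2\,(1-\gamma)}
=\frac{\sqrt\kappa+1}{2\sqrt 2},
\]
which is not $\tfrac{\sqrt\kappa-1}{2}$ (and is strictly smaller for large $\kappa$). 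The discrepancy comes precisely from the backward-decaying half of the bump: those coordinates contribute to the per-step displacement $\sigma_t$ but not to the inaccessible tail, so you pay for drift that buys nothing. Separately, your claim that $\|x_{t+1}\opt-x_t\opt\|$ is a constant independent of $t$ is false for small $t$, since the semi-infinite Green's function carries the boundary correction $\gamma^{i+t+1}$; this forces you to scale $s$ by the supremum of $\sigma_t$, further weakening the bound at generic $t$.

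The paper avoids both problems by making the Hessian itself time-varying: at time $t$ the first $t$ coordinates are given the trivial quadratic $aI_t$ (any $a\in[\mu,L]$), and only the remaining block keeps the tridiagonal structure. This decouples the past, so $x_t\opt$ is constant $\gamma^c$ on coordinates $1,\dots,t$ and exactly Nesterov's one-sided geometric tail $\gamma^{i-t+c}$ thereafter. Two things then fall out cleanly: the displacement $\|x_{t+1}\opt-x_t\opt\|=\gamma^c\sqrt{(1-\gamma)/(1+\gamma)}$ is \emph{identical for every} $t$ (set $\gamma^c$ so this equals $\sigma$), and the tail-to-drift ratio is $\tfrac{\gamma}{1-\gamma}=\tfrac{\sqrt\kappa-1}{2}$, with no asymptotics and no boundary to handle. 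The block replacement also preserves the one-new-coordinate-per-gradient property, so your trapping argument carries over unchanged. If you want to salvage your fixed-Hessian route you will only obtain a lower bound of order $\tfrac{\sqrt\kappa}{2\sqrt 2}\sigma$, not the stated one.
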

\begin{proof}
First, set $\gamma=\frac{\sqrt{\kappa}-1}{\sqrt{\kappa}+1}$ and let $c$ be the solution to $\gamma^c=\sigma\sqrt{\frac{1+\gamma}{1-\gamma}}$, namely $c=\log\left(\sigma \sqrt{\frac{1+\gamma}{1-\gamma}}\right)/\log(\gamma)$. Note that $\frac{\gamma}{\sqrt{1-\gamma^2}}=\frac{\sqrt{\kappa}-1}{2}\kappa^{-1/4}=\frac{\sqrt{\kappa}-1}{2}\sqrt{\frac{1-\gamma}{1+\gamma}}$. Set $\mu=\kappa^{-1}L$, define $A$ as the symmetric tridiagonal operator on 
$\ell_2(\N)$
with $2$'s on the diagonal and $-1$'s on the sub-diagonal, and let $a\in [\mu,L]$. Abusing notation to write the operators as matrices, define
\begin{align*}
f_t(x) &= \frac{1}{2}x^T\begin{pmatrix}
aI_t & \vline & 0\\
\hline
0 & \vline & \frac{L-\mu}{4}A+\mu I
\end{pmatrix}x-\gamma^c x^T\begin{pmatrix}
a1_t\\
\hline
\frac{L-\mu}{4}e_1
\end{pmatrix}.\\
\text{Then, }\nabla f_t(x) &= \begin{pmatrix}
aI_t & \vline & 0\\
\hline
0 & \vline & \frac{L-\mu}{4}A+\mu I
\end{pmatrix}x-\gamma^c \begin{pmatrix}
a1_t\\
\hline
\frac{L-\mu}{4}e_1
\end{pmatrix}\\
\text{so }x_t\opt(i)&=\begin{cases} \gamma^c & i\leq t\\ \gamma^{i-t+c} & i>t\end{cases}\\
\text{and so }\|x_{t+1}\opt-x_t\opt\|^2 &= \gamma^{2c}\frac{1-\gamma}{1+\gamma}.
\end{align*}
Thus, $(f_t)\in \SS(\kappa^{-1},L,\sigma)$. Without loss of generality, assume that $x_0=0$ since shifting $(f_t$) does not affect membership in $\SS(\kappa^{-1},L,\sigma)$. Then, $x_t(i)=0~\forall i>t$ for any  first-order online algorithm. Thus,
\begin{align*}
\|x_t-x_t\opt\| &\geq \left(\sum_{i=t+1}^{\infty} \gamma^{2(i-t+c)}\right)^{1/2}\\
&= \gamma^{c} \frac{\gamma}{\sqrt{1-\gamma^2}}\\
&= \frac{\sqrt{\kappa}-1}{2}\gamma^c\sqrt{\frac{1-\gamma}{1+\gamma}}\\
&= \frac{\sqrt{\kappa}-1}{2}\sigma.
\end{align*}
\end{proof}

\begin{remark}
\label{remark}
If $\kappa\geq 5$, then $\frac{L-\mu}{4}\in[\mu,L]$. If we apply $\ALG(4/(L-\mu),0,0)$ to the online Nesterov function with $a=\frac{L-\mu}{4}$, then it is easy to see that $\|x_t-x_t\opt\|=\frac{\sqrt{\kappa}-1}{2}\sigma$. Thus, online gradient descent with an appropriately large step-size performs optimally against the online Nesterov function.
\end{remark}

In the following subsection, we will construct an algorithm that performs optimally up to a fixed constant against the full class $\SS(\kappa^{-1},L,\sigma)$; that is, it exhibits an upper bound that is equal to the lower bound of Theorem \ref{thm:firstorder} times a fixed constant.

\subsection{Online long-step Nesterov's method} \label{subsec:OLNM}

There is a conceptual difficulty when it comes to adapting accelerated methods to the online setting. Informally, in batch optimization, ``acceleration'' refers to the fact that accelerated methods converge faster than gradient descent. However, the goal in the online optimization framework considered here is reduced tracking error, and not necessarily faster convergence. As shown by the rotating quadratic example, tracking and convergence actually behave differently. Fortunately, we can leverage the fast convergence of Nesterov's method towards reduced tracking error. 

In this section, we present a long-step Nesterov's method; the term ``long-steps'' refers to the fact that the algorithm takes a certain number of steps using the same stale function before catching up to the most recent function and repeating. For this particular long-step Nesterov's method, we are able to prove upper bounds on the tracking error (on the other hand, no bounds for the online Nesterov's method are yet available, and are the subject of current efforts). 

The specific sequence constructed by $\OLNM(T)$ is defined in Algorithm~\ref{algo:OLNM}.
\begin{algorithm}[h]
\begin{algorithmic}[1]
\Require $x_0$
\State $y_0\gets x_0$, $z_0\gets x_0$, $a_0\gets 1$
\For{$t=1,2,\ldots$} 
          \State $z_{t+1} =y_t-\frac{1}{L}\nabla f_{T\lfloor t/T \rfloor}(y_t)$
 \If{     $T \not| ~ t+1$ }
 \State $a_{t+1} = \frac{1+\sqrt{1+4a_t^2}}{2}$
 \State $y_{t+1} = z_{t+1} +\frac{a_t-1}{a_{t+1}}(z_{t+1}-z_t)$
 \State $x_{t+1} = x_t$ \Comment{so $x_{t+1} = x_t = x_{T\lfloor t/T\rfloor}$}
 \ElsIf{$T\mid t+1$}
 \State $a_{t+1} = 1$
 \State $y_{t+1} = z_{t+1}$
 \State $x_{t+1} = z_{t+1}$
 \EndIf
\EndFor
\end{algorithmic}
\caption{$\OLNM(T)$}
\label{algo:OLNM}
\end{algorithm}

The method can be extended to the composite case by applying the proximal operator to the $z$ iterates. Theorem \ref{thm:onlineNesterov} gives an upper bound on the tracking iterate error of $\OLNM$, using results from \cite[Thm. 10.34]{Beck}.

\begin{theorem} \label{thm:onlineNesterov}
If $(f_t)\in \SS(\kappa^{-1},L,\sigma)$, then, given $x_0$, $\OLNM(T)$ where $T=c\sqrt{\kappa}$ for $c>2$ such that $c\sqrt{\kappa}\in\N$, constructs a sequence $(x_t)$ such that
\begin{align} \label{eq:OLNM}
    \limsup_{t\to\infty}\|x_t-x_t\opt\|\leq \frac{2c(c-1)}{c-2}\sqrt{\kappa}\sigma.
\end{align}
\end{theorem}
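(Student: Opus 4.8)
The plan is to exploit the block structure of $\OLNM(T)$. Since $x_t$ is updated only when $T\mid t+1$, the index range $\{kT,kT+1,\dots,(k+1)T-1\}$ forms the $k$-th ``block'': throughout it the iterate is frozen at $x_t=x_{kT}$, while the $z,y,a$ variables run (exactly) $T$ iterations of FISTA with step size $1/L$ on the single stale function $f_{kT}$, starting from $z_{kT}=y_{kT}=x_{kT}$ (the restart sets $a_{kT}=1$) and ending at $x_{(k+1)T}=z_{(k+1)T}$. Hence \cite[Thm.\ 10.34]{Beck} applies verbatim to each block and gives
\[
 f_{kT}\big(x_{(k+1)T}\big)-f_{kT}\opt \;\le\; \frac{2L\,\|x_{kT}-x_{kT}\opt\|^2}{(T+1)^2}.
\]
Combining this with $\mu$-strong convexity of $f_{kT}$, i.e.\ $\tfrac{\mu}{2}\|x_{(k+1)T}-x_{kT}\opt\|^2\le f_{kT}(x_{(k+1)T})-f_{kT}\opt$, and writing $d_k\coloneqq\|x_{kT}-x_{kT}\opt\|$, yields $\|x_{(k+1)T}-x_{kT}\opt\|\le\rho\,d_k$ with $\rho\coloneqq 2\sqrt{\kappa}/(T+1)$.

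Next I would add the temporal drift across a block boundary: $\|x_{kT}\opt-x_{(k+1)T}\opt\|\le\sum_{j=kT}^{(k+1)T-1}\sigma_j\le T\sigma$, so the triangle inequality gives the linear recursion $d_{k+1}\le\rho\,d_k+T\sigma$. Since $T=c\sqrt{\kappa}$ with $c>2$ we have $\rho<2/c<1$, so iterating the recursion (a geometric series) gives $\limsup_k d_k\le T\sigma/(1-\rho)$. To get a bound for every $t$, not just block starts, fix $t$ with $k=\lfloor t/T\rfloor$ and use the within-block triangle inequality $\|x_t-x_t\opt\|=\|x_{kT}-x_t\opt\|\le d_k+\|x_{kT}\opt-x_t\opt\|\le d_k+(T-1)\sigma\le d_k+T\sigma$; letting $t\to\infty$ (so $k\to\infty$) gives $\limsup_t\|x_t-x_t\opt\|\le\limsup_k d_k+T\sigma\le T\sigma\big(\tfrac{1}{1-\rho}+1\big)$. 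Finally, plugging in $T=c\sqrt{\kappa}$ and $\tfrac{1}{1-\rho}<\tfrac{1}{1-2/c}=\tfrac{c}{c-2}$ gives $T\sigma\big(\tfrac{1}{1-\rho}+1\big)\le c\sqrt{\kappa}\,\sigma\cdot\tfrac{2(c-1)}{c-2}=\tfrac{2c(c-1)}{c-2}\sqrt{\kappa}\,\sigma$, which is exactly \eqref{eq:OLNM}.

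The FISTA estimate and the geometric-series bookkeeping are routine; the part needing the most care is reading the exact semantics off Algorithm~\ref{algo:OLNM} — confirming that each block performs precisely $T$ gradient evaluations all on $f_{kT}$, that the restart ($a_{kT}=1$, $y_{kT}=z_{kT}=x_{kT}$) makes the FISTA guarantee start cleanly from $x_{kT}$, and that it is exactly the ``$+T\sigma$'' slack coming from carrying a stale $x_{kT}$ through the block that upgrades the block-start constant $\tfrac{c^2}{c-2}$ to the stated $\tfrac{2c(c-1)}{c-2}$. It is also worth noting that the two hypotheses, $c\sqrt{\kappa}\in\N$ and $c>2$, are exactly what is needed for $T$ to be an admissible integer long-step length and for $\rho<1$ (equivalently $c-2>0$ in the denominator), respectively.
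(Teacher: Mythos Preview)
Your proof is correct and follows essentially the same route as the paper's own argument: a per-block FISTA contraction (the paper invokes the same \cite[Thm.~10.34]{Beck}), triangle inequality to absorb the $T\sigma$ drift into a linear recursion on $d_k$, a geometric-series bound on $\limsup_k d_k$, the within-block slack $\|x_t-x_t\opt\|\le d_{\lfloor t/T\rfloor}+T\sigma$, and the same final algebra $\tfrac{T^2}{T-2\sqrt{\kappa}}+T=\tfrac{2c(c-1)}{c-2}\sqrt{\kappa}$. The only cosmetic difference is that you derive the contraction factor explicitly as $\rho=2\sqrt{\kappa}/(T+1)$ from the function-value estimate plus strong convexity and then relax it to $2/c$, whereas the paper simply quotes $2\sqrt{\kappa}/T$ as a ``standard batch bound''; this does not change anything.
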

\begin{proof}
First, via standard batch optimization bounds, we have
\begin{align*}
    \|x_{kT}-x_{(k-1)T}\opt\|\leq \frac{2\sqrt{\kappa}}{T}\|x_{(k-1)T}-x_{(k-1)T}\opt\|.
\end{align*}
Thus,
\begin{align*}
    \|x_{kT}-x_{kT}\opt\|&\leq\frac{2\sqrt{\kappa}}{T}\|x_{(k-1)T}-x_{(k-1)T}\opt\|+T\sigma\\
    &\leq \left(\frac{2\sqrt{\kappa}}{T}\right)^k\|x_0-x_0\opt\|+\frac{T\sigma}{1-\frac{2\sqrt{\kappa}}{T}}\\
    &= \left(\frac{2\sqrt{\kappa}}{T}\right)^k\|x_0-x_0\opt\|+\frac{T^2\sigma}{T-2\sqrt{\kappa}}
\end{align*}
and so
\begin{align*}
    \|x_t-x_t\opt\| &= \|x_{T\lfloor t/T \rfloor}-x_t\opt\|\\
    &\leq \|x_{T\lfloor t/T \rfloor}-x_{T\lfloor t/T \rfloor}\opt\|+\|x_t\opt-x_{T\lfloor t/T \rfloor}\opt\|\\
    &\leq \left(\frac{2\sqrt{\kappa}}{T}\right)^{\lfloor t/T \rfloor}\|x_0-x_0\opt\|+\frac{T^2\sigma}{T-2\sqrt{\kappa}}+T\sigma.
\end{align*}
Then, taking the limit supremum, we get
\begin{align*}
    \limsup_{t\to\infty}\|x_t-x_t\opt\| &\leq \frac{T^2\sigma}{T-2\sqrt{\kappa}}+T\sigma\\
    &= \frac{2T\sigma\left(T-\sqrt{\kappa}\right)}{T-2\sqrt{\kappa}}\\
    &= \frac{2c(c-1)}{c-2}\sqrt{\kappa}\sigma.
\end{align*}
\end{proof}

If we minimize the bound in Eq.~\eqref{eq:OLNM} over $c\in\R$, then we get $c=2+\sqrt{2}$ with a value of $2c(c-1)/(c-2)=6+4\sqrt{2}\approx 11.66$; this is in contrast with the batch setting, where $c=\sqrt{8}$. However, we have the extra restriction that $c\sqrt{\kappa}\in\N$ so, in general, we will take $T=\lfloor (2+\sqrt{2})\sqrt{\kappa}\rfloor$.

Note that the bound is asymptotically (as the condition number goes to infinity) optimal (hence tight) up to the constant $4c(c-1)/(c-2)$. In particular, for $\kappa \geq (4c(c-1)/(c-2))^2$, the bound is better than the bound for online gradient descent. However, these are bounds over a general class of functions. One question is how the two methods fare against specific examples, as exemplified next.

As we noted in Remark \ref{remark}, online gradient descent with an appropriately large step-size performs optimally against the online Nesterov function. Even with a typical step-size, the tracking iterate error of online gradient descent still scales linearly with the square root of the condition number. Furthermore, while $\OLNM$ also scales linearly with the square root of the condition number, online gradient descent has a smaller constant. Figure \ref{fig:adversary}(a) depicts the iterate error for algorithms applied to the online Nesterov function with condition number equal to 500. Online gradient descent performs the best, followed by online Nesterov's method and $\OLNM$. In fact, this can be seen in the dependence on the condition number as well. Figure \ref{fig:adversary}(b) shows the linear dependence of the tracking iterate error on the square root of the condition number. The constant for online gradient descent is 0.481, the constant for online Nesterov's method is 1.101, and the constant for $\OLNM$ is 2.491. Note that, despite $\OLNM$ having a worse constant than online gradient descent, the former's constant is still less than its upper bound of $2c(c-1)/(c-2)\approx 11.66$.

\begin{figure}[H]
\subfigure[]{\includegraphics[width=.48\linewidth]{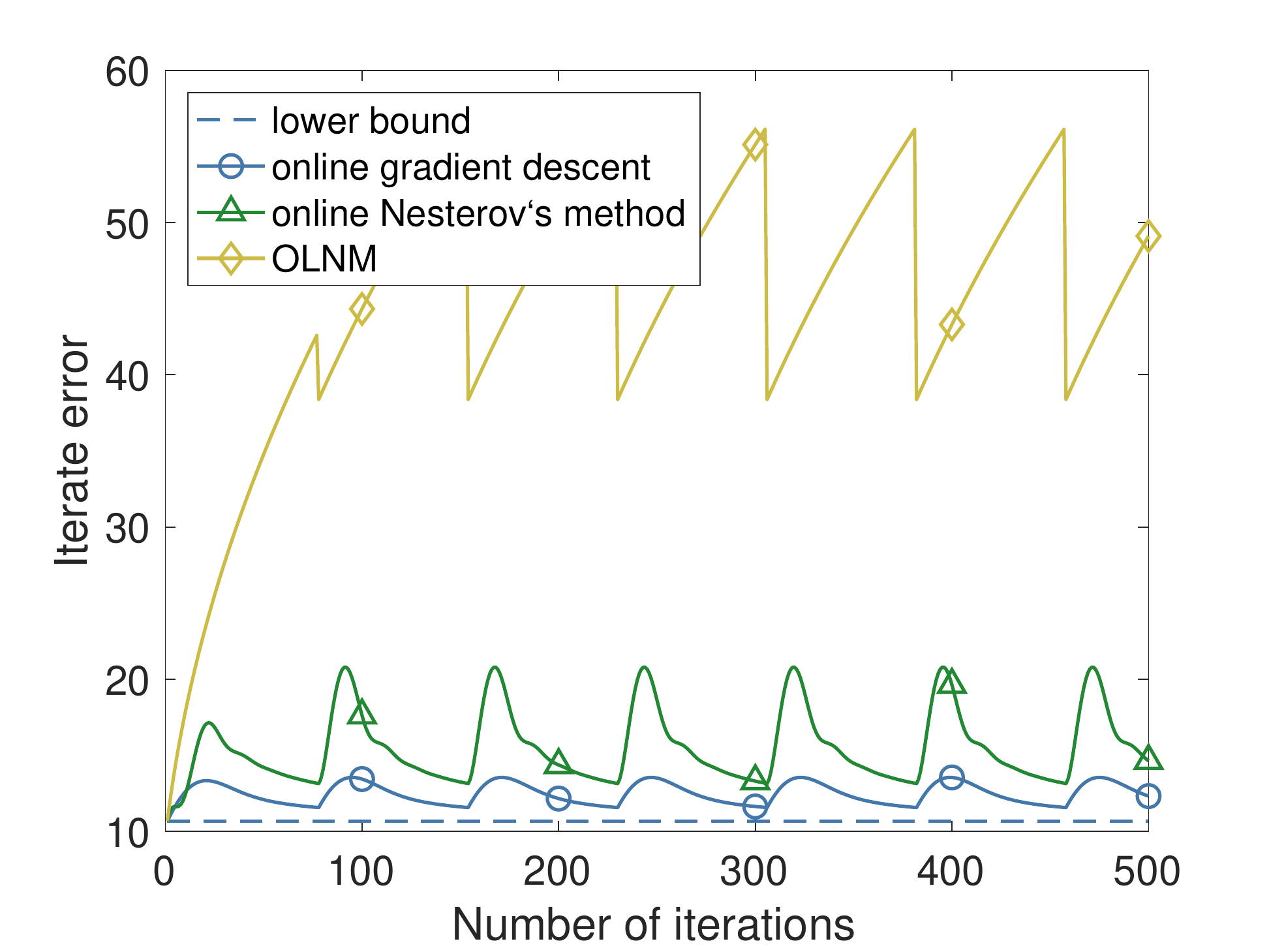}}
\subfigure[]{\includegraphics[width=.48\linewidth]{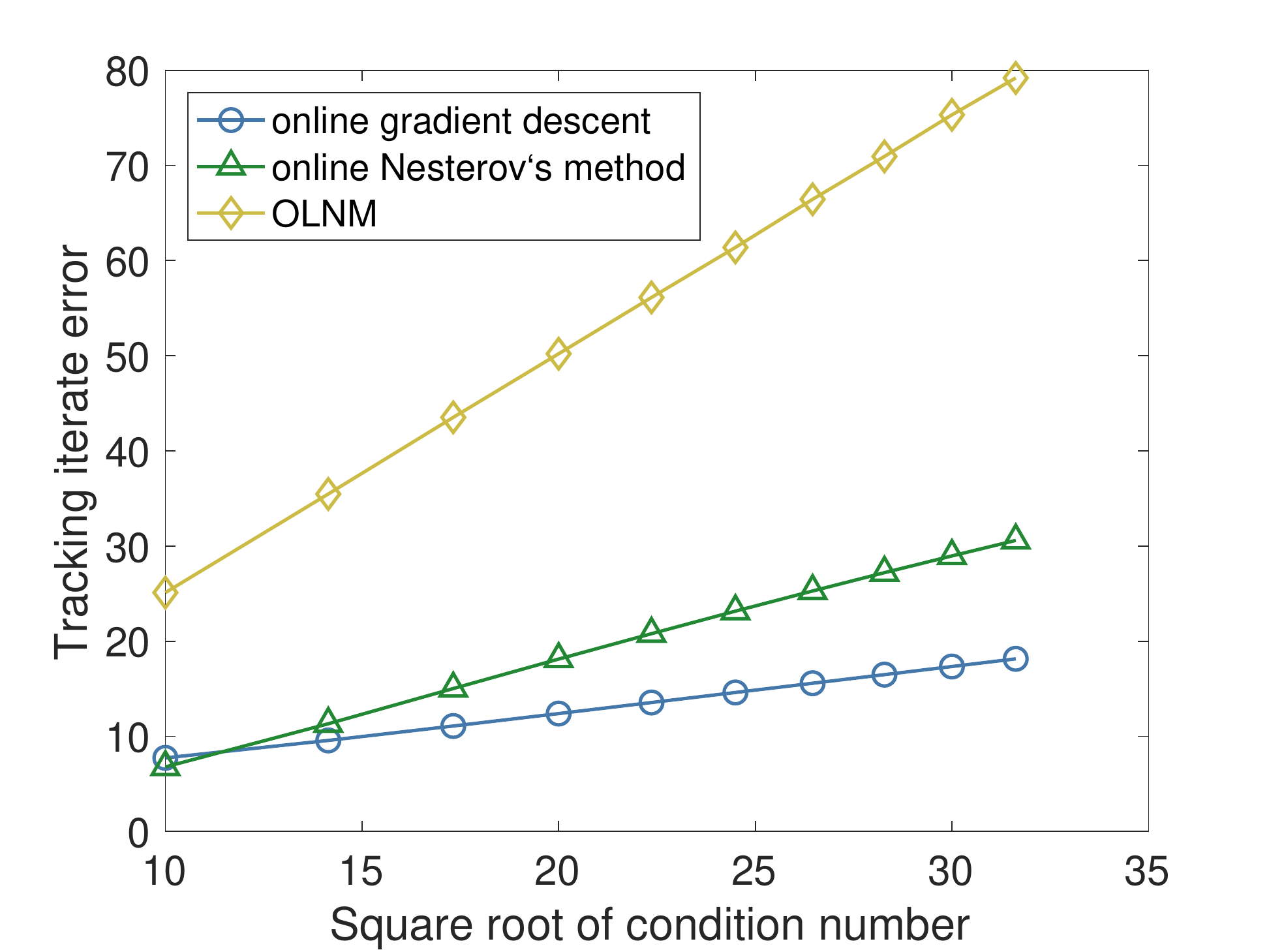}}
\caption{Algorithms applied to the online Nesterov function with $L=500$, $d=1000$, $a=(L+\mu)/2$, $\sigma =1$, and $T=\lfloor(2+\sqrt{2})\sqrt{\kappa}\rfloor$. (a) Evolution of the iterate error for the particular example $\mu=1$. (b) Tracking iterate error for varying $\mu$.}
\label{fig:adversary}
\end{figure}

For the rotating quadratic example, the minimizer is fixed, so $\OLNM$ has the same convergence rate as Nesterov's method does in the batch setting. However, with the right step-size, online gradient descent can converge in just two steps! On the other hand, for the translating quadratic example, since the tracking iterate error of online gradient descent scales with $\kappa$, while the tracking iterate error of $\OLNM$ scales with $\sqrt{\kappa}$, $\OLNM$ outperforms online gradient descent for sufficiently high condition number. In particular, Figure \ref{fig:translating}(a) depicts the iterate error for algorithms applied to the translating quadratic function with condition number equal to 500. Online Nesterov's method performs the best, followed by $\OLNM$ and online gradient descent. Figure \ref{fig:translating}(b) shows the tracking iterate error for varying condition number for $\OLNM$ (online gradient descent and online Nesterov's method are left out since we analytically solved for their tracking iterate error). The constant is 7.21. Note that this is larger than the constant for $\OLNM$ applied to the online Nesterov function. While the online Nesterov function is a universal adversary, the translating quadratic is more particularly adversarial for $\OLNM$ because the minimizer is maximizing its distance away from the $\OLNM$ iterates by moving in a straight line away from the old minimizer the $\OLNM$ iterates are approaching. Also note that this is more than half of the upper bound, showing that the upper bound is tight at least up to a constant less than two.

\begin{figure}[H]
\subfigure[]{\includegraphics[width=.48\linewidth]{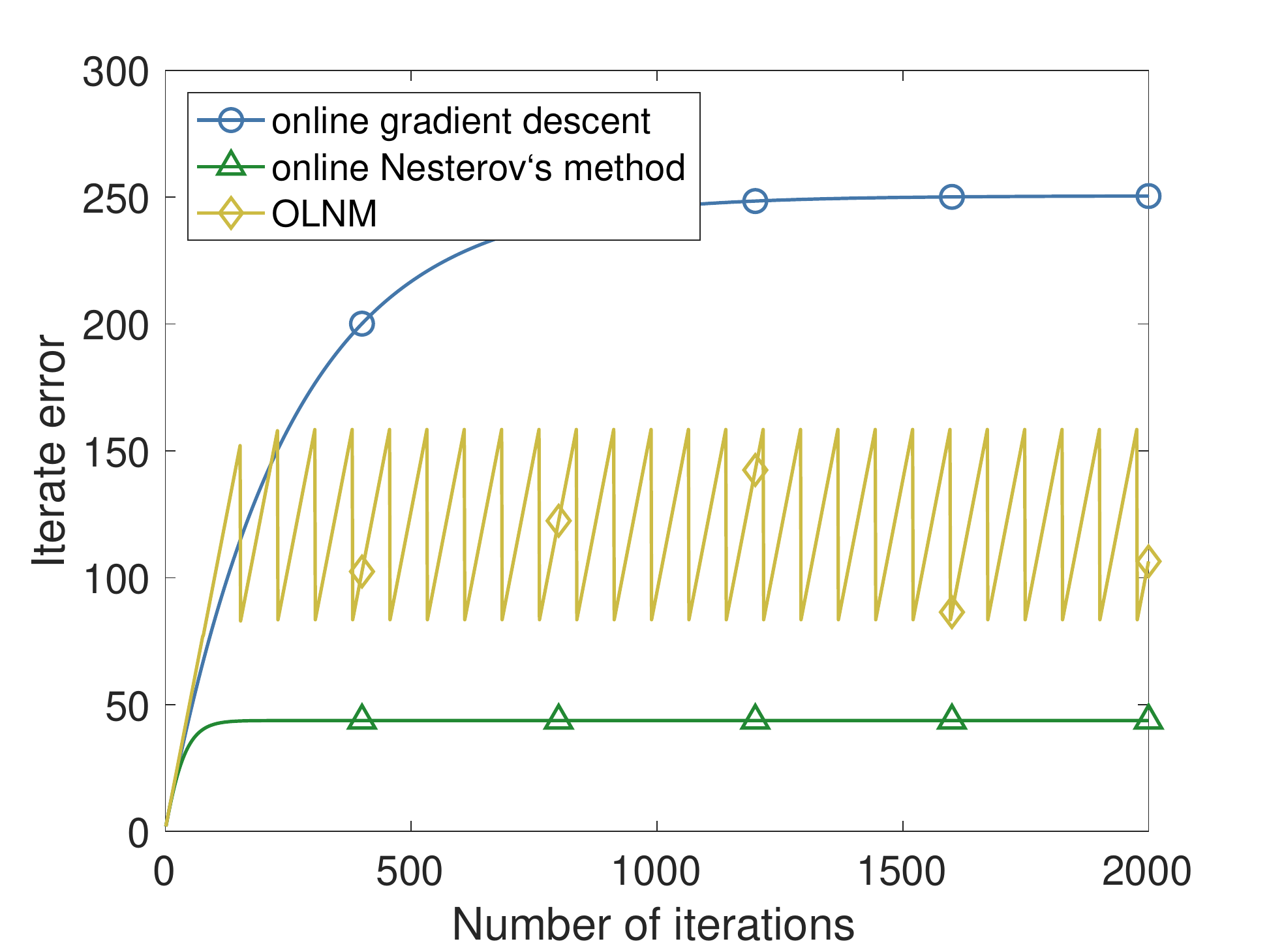}}
\subfigure[]{\includegraphics[width=.48\linewidth]{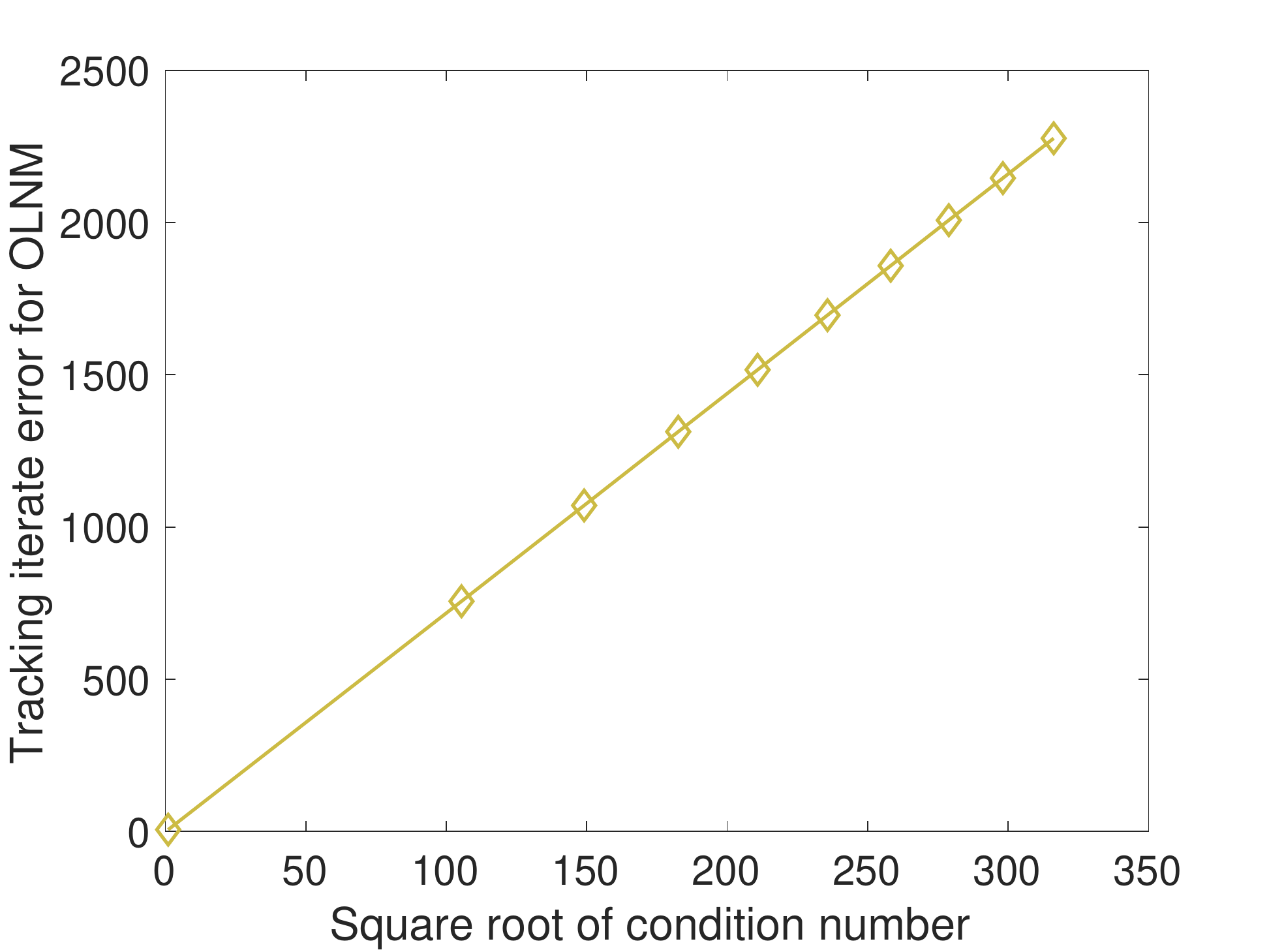}}
\caption{Algorithms applied to the translating quadratic function with $d=2$, $\sigma =1$, and $T=\lfloor(2+\sqrt{2})\sqrt{\kappa}\rfloor$. (a) shows the evolution of the iterate error for $L=500$ and $\mu=1$. (b) shows the tracking iterate error for $L=1$ and varying $\mu$.}
\label{fig:translating}
\end{figure}

\section{Regularization} \label{sec:regularization}

When the functions are convex, but not strongly convex, performance metrics for online first-order methods typically rely on dynamic regret bounds. The dynamic regret is the \textit{averaged} function error. We take a different approach, however, deriving a gradient error bound via regularization. While gradient error bounds are weaker than function error bounds, the benefit is that we bound the error, rather than the averaged error.

The main idea is that there is a trade-off between tracking error and regularization error. Regularizing by any amount means that we can apply Theorem \ref{thm:stronglyConvex1}. In this case, increasing the amount of regularization increases the regularization error while also decreasing the tracking error. In the following, we provide a framework for balancing these two errors.

Given $x_0$, let \emph{online regularized gradient descent,} $\ORGD(\delta,x_c)$, with $\delta>0$, construct the sequence $(x_t)$ via
\begin{align*} \label{eq:ORGD}
    x_{t+1} = x_t-\frac{2}{L+2\delta}(\nabla f_t(x_t)+\delta(x_t-x_c)). \tag{$\ORGD(\delta,x_c)$}
\end{align*}
It is easy to see that $\ORGD(\delta,x_c)$ is vanilla online gradient descent for the regularized problem $f_t(\cdot;\delta,x_c)=f_t+\frac{\delta}{2}\|\cdot-x_c\|^2$. Since we don't vary $x_c$ in the analysis, we write $f_t(\cdot;\delta)$ for simplicity.

Now, in order to bound the algorithm error of $\ORGD$, we need to bound the regularization error. As with the algorithm error, we can measure the regularization error in terms of the variable, the cost, or the gradient. Unfortunately, it is impossible, without further assumptions, to bound the variable regularization error, $\|x\opt-x_r\opt\|$ where $x_r\opt$ is the unique minimizer to the regularized problem \cite{Rockafeller,Nedic}. For example, if we regularize a constant function by any amount, then there are minimizers arbitrarily far away from the regularized minimizer. However, if we assume the function is coercive ($\|x\|\to\infty \implies f(x)\to\infty$), then we know the variable regularization error \textit{is} bounded. But, it is still impossible to bound it in terms of the strong smoothness constant. For example, assume that we are going to regularize by adding $\frac{\delta}{2}\|\cdot\|^2$. Then, for an arbitrary distance $\DD$, there exists an L-smooth function such that the variable regularization error is greater than $\DD$. In particular, consider $f(x)=\frac{\lambda}{2}\|x-2\DD e\|^2$ where $\lambda\leq \min\{L,\delta\}$ and $e$ is a unit vector. Then $x_r\opt=\frac{\lambda}{\lambda+\delta}e$ and so $\|x\opt-x_r\opt\|=\frac{\delta}{\lambda+\delta}\|2\DD e\|\geq \DD$.

Fortunately, even without coercivity, it \textit{is} possible to bound the gradient regularization error \cite[Thm. 2.2.7]{Nesterov2}, which allows us to bound the tracking gradient error of $\ORGD$. However, we do have to make an additional assumption. Loosely, we have to assume that the sequence of minimizer sets doesn't ``drift.'' To make the assumption more precise, we need some definitions.

First, let $x_t\opt(\delta)=\argmin_x\, f_t(x;\delta)$ for $\delta>0$. We know the right-hand side is a singleton by strong convexity. Furthermore, since $f_t$ is proper, closed, and convex, $\partial f_t$ is maximally monotone, and so we can apply \cite[Thm. 23.44]{Combettes}, which tells us that $x_t\opt(\cdot)$ is continuous and $\lim_{\delta\to 0}x_t\opt(\delta)$ is the unique projection of $x_c$ onto the zero set of $\partial f_t$. Thus, we can define $x_t\opt(0)=\lim_{\delta\to 0}x_t\opt(\delta)$. We also have that $\delta \mapsto \|x_t\opt(\delta)-x_c\|$ is monotonically decreasing (this is not hard to show and can be found in the proof of \cite[Thm. 2.2.7]{Nesterov2}). Let $R(\delta;x_c)=\sup_{t\in\N\cup\{0\}}\|x_t\opt(\delta)-x_c\|$ for all $\delta\geq 0$. Again, since we do not vary $x_c$ in the analysis, we write $R(\delta)$ for simplicity. Note that $R(\cdot)$ is also monotonically decreasing. Thus, if $R(0)<\infty$, then $R(\delta)<\infty$ for all $\delta\geq 0$. We will assume this is true:
\begin{align*} \label{eq:assump}
    R(0)<\infty.
    \tag{bounded drift}
\end{align*}
While this assumption precludes problems like the translating quadratic, it is realistic when the problem is data-dependent. For machine learning problems it is common to have normalized data and to be learning normalized weights. Then, the minimizer will, in fact, lie in a bounded set.

Let $\sigma(\delta)=\sup_{t\in\N}\|x_t\opt(\delta)-x_{t-1}\opt(\delta)\|$ for $\delta\geq 0$. Note that $\sigma(\delta)$ is bounded above by $2R(\delta)$, via the triangle inequality, which in turn is bounded above by $R(0)<\infty$, via the monotonicity of $R(\cdot)$.

Finally, consider the function $h(\delta)=\frac{\sigma(\delta)}{R(\delta)}-2\left(\frac{\delta}{L}\right)^2$ for $\delta\geq 0$. $h(\cdot)$ is continuous since $x_t\opt(\cdot)$ is continuous and $R(\delta)>0$ for all $\delta\geq 0$ (unless $x_c=x_t\opt(0)$, which would be the trivial case). Also, $h(0)=\frac{\sigma(0)}{R(0)}>0$ and $h(L)\leq 0$. Thus, via the Intermediate Value Theorem, we have just proved the following lemma.

\begin{lemma}
\label{thm:lemma}
If $(f_t)\in \SS'(L)$ has bounded drift, then $\exists~ 0<\delta\leq L$ s.t. $\delta=L\sqrt{\frac{\sigma(\delta)}{2R(\delta)}}$.
\end{lemma}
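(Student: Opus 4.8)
The plan is to apply the Intermediate Value Theorem to the auxiliary function $h(\delta)=\frac{\sigma(\delta)}{R(\delta)}-2\left(\frac{\delta}{L}\right)^2$ on the interval $[0,L]$, exactly as the discussion preceding the lemma already sets up. The rearrangement is purely algebraic: $h(\delta)=0$ is equivalent to $\left(\frac{\delta}{L}\right)^2=\frac{\sigma(\delta)}{2R(\delta)}$, i.e. $\delta=L\sqrt{\frac{\sigma(\delta)}{2R(\delta)}}$, so producing a zero of $h$ in $(0,L]$ is the same as producing the desired $\delta$.

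First I would verify that $h$ is well-defined and continuous on $[0,L]$. Continuity of $R(\cdot)$ on $[0,L]$ follows from continuity of $\delta\mapsto x_t\opt(\delta)$ (via \cite[Thm.\ 23.44]{Combettes}) together with the monotone-decreasing structure, and similarly $\sigma(\cdot)$ is continuous; the denominator $R(\delta)$ is strictly positive for all $\delta\in[0,L]$ because $x_c\neq x_t\opt(0)$ in the nontrivial case, so no division-by-zero issue arises. Here I would lean on the bounded-drift hypothesis to ensure $R(0)<\infty$, hence $R(\delta)<\infty$ for all $\delta\geq 0$ by monotonicity, so $h$ takes finite values throughout.

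Next I would check the sign of $h$ at the two endpoints. At $\delta=0$ we have $h(0)=\frac{\sigma(0)}{R(0)}-0=\frac{\sigma(0)}{R(0)}\geq 0$; in the nontrivial case $\sigma(0)>0$ (otherwise the trajectory is constant and the problem is degenerate), giving $h(0)>0$. At $\delta=L$ we have $h(L)=\frac{\sigma(L)}{R(L)}-2$, and since $\sigma(L)\leq 2R(L)$ by the triangle inequality, $\frac{\sigma(L)}{R(L)}\leq 2$, so $h(L)\leq 0$. With $h$ continuous on $[0,L]$, $h(0)>0$, and $h(L)\leq 0$, the Intermediate Value Theorem yields some $\delta\in(0,L]$ with $h(\delta)=0$, which after the rearrangement above is precisely the claimed $\delta$.

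The only genuine subtlety — and the step I would treat most carefully — is the continuity of $\sigma(\cdot)$ and $R(\cdot)$, since these are defined as suprema over the infinitely many time indices $t$, and a supremum of continuous functions is only lower semicontinuous in general. I would either invoke an equicontinuity-type argument (the maps $x_t\opt(\cdot)$ are nonexpansive-like in $\delta$ uniformly in $t$, coming from the resolvent/monotonicity estimates in the proof of \cite[Thm.\ 2.2.7]{Nesterov2}, so the sup is continuous), or sidestep it by noting that we only need $h(0)>0$ and $h(L)\leq 0$ plus upper semicontinuity at worst for an approximate-zero argument; but the cleanest route is the uniform-continuity one, which the preceding paragraph of the paper implicitly relies on. Everything else is routine.
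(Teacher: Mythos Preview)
Your proposal is correct and follows essentially the same route as the paper: define $h(\delta)=\frac{\sigma(\delta)}{R(\delta)}-2(\delta/L)^2$, check $h(0)>0$ and $h(L)\le 0$, and invoke the Intermediate Value Theorem. The only difference is that you flag the continuity-of-suprema subtlety more explicitly than the paper does, but the argument is otherwise identical.
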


In Theorem \ref{thm:onlineRegularization}, we derive a bound on the tracking gradient error in terms of $\sigma(\delta)$ and $R(\delta)$. However, since $\sigma(\delta)$ and $R(\delta)$ both depend on $\delta$, it is impossible, without further information about the function sequence, to minimize the bound explicitly with respect to $\delta$. The $\delta$ in Lemma \ref{thm:lemma} corresponds to what the minimizing $\delta$ would be if $\sigma(\delta)$ and $R(\delta)$ were constant.

\begin{theorem} \label{thm:onlineRegularization}
If $(f_t)\in \SS'(L)$ has bounded drift, then $\exists ~0<\delta\leq L$ such that, given $x_0$, $\ORGD(\delta,x_c)$ constructs a sequence $(x_t)$ with
\begin{align}
    \label{eq:reg}
    \limsup_{t\to\infty}\|\nabla f_t(x_t)\|\leq 2\sqrt{2}L\sqrt{\sigma(\delta) R(\delta)}.
\end{align}
\end{theorem}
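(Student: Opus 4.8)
The plan is to combine three ingredients: (i) the tracking bound for online gradient descent from Theorem~\ref{thm:stronglyConvex1} applied to the regularized sequence $(f_t(\cdot;\delta))$, (ii) Nesterov's bound on the gradient regularization error \cite[Thm.~2.2.7]{Nesterov2}, and (iii) the particular choice of $\delta$ furnished by Lemma~\ref{thm:lemma}. Concretely, observe that $f_t(\cdot;\delta) = f_t + \tfrac{\delta}{2}\|\cdot - x_c\|^2$ is $(L+\delta)$-strongly smooth and $\delta$-strongly convex, and that $\ORGD(\delta,x_c)$ is exactly vanilla online gradient descent on this sequence with step-size $\alpha = 2/((L+\delta)+\delta) = 2/(L+2\delta)$, which is precisely the optimal step-size $2/(\mu'+L')$ for $\mu' = \delta$, $L' = L+\delta$. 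So Theorem~\ref{thm:stronglyConvex1} applies with $\sigma$ replaced by $\sigma(\delta)$ (the temporal variability of the regularized minimizers), giving $\limsup_t \|x_t - x_t\opt(\delta)\| \leq (\alpha\delta)^{-1}\sigma(\delta) = \tfrac{L+2\delta}{2\delta}\sigma(\delta)$.

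Next I would convert this iterate bound on the regularized problem into a gradient bound on the original problem. Using strong smoothness of $f_t(\cdot;\delta)$, $\|\nabla f_t(x_t;\delta)\| \le (L+\delta)\|x_t - x_t\opt(\delta)\|$, and then the triangle inequality $\|\nabla f_t(x_t)\| \le \|\nabla f_t(x_t;\delta)\| + \delta\|x_t - x_c\|$. For the second term I would bound $\|x_t - x_c\| \le \|x_t - x_t\opt(\delta)\| + \|x_t\opt(\delta) - x_c\| \le \|x_t - x_t\opt(\delta)\| + R(\delta)$. Taking the limit supremum and collecting terms yields something of the form $\limsup_t \|\nabla f_t(x_t)\| \le (L+2\delta)\cdot\tfrac{L+2\delta}{2\delta}\sigma(\delta) + \delta R(\delta)$ (up to bookkeeping of which strong-smoothness constant multiplies which piece). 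The remaining step is to invoke \cite[Thm.~2.2.7]{Nesterov2}: the gradient of $f_t$ at the regularized minimizer $x_t\opt(\delta)$ has norm at most $\delta \|x_t\opt(\delta) - x_c\| \le \delta R(\delta)$, so really the error decomposes as tracking error on the regularized problem plus a gradient regularization error of size $O(\delta R(\delta))$.

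Finally, I would substitute the $\delta$ from Lemma~\ref{thm:lemma}, which satisfies $\delta = L\sqrt{\sigma(\delta)/(2R(\delta))}$, equivalently $2(\delta/L)^2 = \sigma(\delta)/R(\delta)$, i.e. $\delta R(\delta) = \tfrac{L}{\sqrt 2}\sqrt{\sigma(\delta) R(\delta)}$ and $\tfrac{L^2}{\delta}\sigma(\delta) = L\sqrt{2}\,\sqrt{\sigma(\delta)R(\delta)}$. The point of this choice is that it balances the two competing error terms so that both scale like $\sqrt{\sigma(\delta)R(\delta)}$; summing the constants should land on the advertised $2\sqrt{2}L\sqrt{\sigma(\delta)R(\delta)}$ (with the $\delta \le L$ bound used to absorb lower-order terms of the form $\delta^2/L$ against $\delta$, since $\delta/L \le 1$).

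The main obstacle I anticipate is getting the constant exactly right: one must be careful about whether the dominant strong-smoothness constant is $L$ or $L+\delta$ at each use, and about which cross-terms (e.g. $\delta \cdot \|x_t - x_t\opt(\delta)\|$ versus $L \cdot \|x_t - x_t\opt(\delta)\|$) can be discarded or merged using $\delta \le L$. A clean route is probably to bound everything crudely in terms of $L$ rather than $L+\delta$ early on, e.g. noting $\nabla f_t(x_t) = \nabla f_t(x_t;\delta) - \delta(x_t - x_c)$ and $\nabla f_t(x_t\opt(\delta);\delta) = 0$ so that $\nabla f_t(x_t) = \nabla f_t(x_t;\delta) - \nabla f_t(x_t\opt(\delta);\delta) + \nabla f_t(x_t\opt(\delta)) - \nabla f_t(x_t\opt(\delta))$, then splitting into an algorithm part controlled by Theorem~\ref{thm:stronglyConvex1} and a regularization part controlled by \cite[Thm.~2.2.7]{Nesterov2}; this makes the $\sqrt{\sigma(\delta)R(\delta)}$ scaling transparent and leaves only the final constant-chasing, which the choice of $\delta$ in Lemma~\ref{thm:lemma} is designed to make work out.
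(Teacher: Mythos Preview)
Your proposal is correct and follows essentially the same approach as the paper: the paper uses precisely your ``clean route'', writing $\|\nabla f_t(x_t)\| \le \|\nabla f_t(x_t\opt(\delta))\| + \|\nabla f_t(x_t)-\nabla f_t(x_t\opt(\delta))\| \le \delta R(\delta) + L\|x_t-x_t\opt(\delta)\|$ (using the first-order condition and $L$-Lipschitzness of $\nabla f_t$, not $L+\delta$), then applying Theorem~\ref{thm:stronglyConvex1} to the regularized sequence and substituting the $\delta$ from Lemma~\ref{thm:lemma}. The only bookkeeping detail you did not anticipate is that the leftover lower-order term is $L\sigma(\delta)$, which the paper absorbs via $\sigma(\delta)\le 2R(\delta)$ (so that $L\sigma(\delta)\le L\sqrt{2\sigma(\delta)R(\delta)}$) rather than via $\delta\le L$.
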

\begin{proof}
Let $\delta$ be as in Lemma \ref{thm:lemma}. Observe,
\begin{align*}
    0&=\nabla f_t(x_t\opt(\delta);\delta)\\
    &=\nabla f_t(x_t\opt(\delta))+\delta(x_t\opt(\delta)-x_c)
\end{align*}
so
\begin{align*}
    \|\nabla f_t(x_t\opt(\delta))\| = \delta \|x_t\opt(\delta)-x_c\|.
\end{align*}
Thus,
\begin{align*}
    \|\nabla f_t(x_t)\|&\leq \|\nabla f_t(x_t\opt(\delta))\|+\|\nabla f_t(x_t)-\nabla f_t(x_t\opt(\delta))\|\\
    &\leq \delta \|x_t\opt(\delta)-x_c\|+L\|x_t-x_t\opt(\delta)\|\\
    &\leq \delta R(\delta)+L\|x_t-x_t\opt(\delta)\|
\end{align*}
so
\begin{align*}
    \limsup_{t\to\infty}\|\nabla f_t(x_t)\|
    &\leq \delta R(\delta) + \frac{L(L+2\delta)\sigma(\delta)}{2\delta}\\
    &= L\sqrt{2\sigma(\delta)R(\delta)}+L\sigma\\
    &\leq L\sqrt{2\sigma(\delta)R(\delta)}+L\sqrt{2\sigma(\delta)R(\delta)}\\
    &= 2\sqrt{2}L\sqrt{\sigma(\delta) R(\delta)}.
\end{align*}
\end{proof}

Note that if $\sigma(0)\approx\sigma(\delta)\approx R(\delta)\approx R(0)$ then the bound in Eq. \ref{eq:reg} is $\approx 2\sqrt{2}LR(0)$, which is of the same form but has worse constants than the bound for the algorithm which abstains from tracking (i.e., 
$\forall t ~x_t=x_c$):
\begin{align*}
    \|\nabla f_t(x_t)\| &= \|\nabla f_t(x_c)\|\\
    &= \|\nabla f_t(x_c)-\nabla f_t(x_t\opt(0))\|\\
    &\leq L \|x_c-x_t\opt(0)\|\\
    &\leq LR(0).
\end{align*}
Conversely, if $\sigma(\delta)\ll R(\delta)$, then $\delta$ is small so $\sigma(0)\approx\sigma(\delta)$ and $R(0)\approx R(\delta)$. In this case, Eq. \ref{eq:reg} becomes $\approx 2\sqrt{2}\sqrt{\frac{\sigma(0)}{R(0)}}LR(0)\ll LR(0)$. Thus, we can only guarantee the usefulness of $\ORGD$ when $\sigma(\delta)\ll R(\delta)$.

\section{Illustrative numerical examples} \label{sec:numerics}

Our code is online at \href{https://github.com/liammadden/time-varying-experiments}{github.com/liammadden/time-varying-experiments}. In this section, we consider time-varying least-squares regression and time-varying logistic regression. At each $t\in\N\cup\{0\}$, we are given an input matrix $A^{(t)}\in\R^{n\times d}$, with $i$-th row vector denoted by $a^{(t)}_i$, and an $n$-dimensional output vector $b^{(t)}$, where each $(a^{(t)}_i,b^{(t)}_i)$ corresponds to a single data point. For least-squares regression, $b^{(t)}\in\R^n$. For logistic regression, $b^{(t)}\in\{-1,1\}^n$. For simplicity, we assume the inputs are constant across time, i.e. $A_t=A$ for all $t$, while only the outputs change. This type of time-variation fits applications with time-series data where the time-dependency is not captured by any of the input features. For example, a problem may have a discrete number of states that it switches between based on a hidden variable.

The cost function for least-squares regression is
\begin{align*}
    f_t(x) &= \frac{1}{2}\sum_{i=1}^n \left(\langle a_i,x\rangle-b_i^{(t)}\right)^2\\
    &= \frac{1}{2}\|Ax-b^{(t)}\|^2
\end{align*}
and for logistic regression is
\begin{align*}
    f_t(x) = \frac{1}{n}\sum_{i=1}^n \log\left(1+\exp(-b^{(t)}_i\langle a_i,x\rangle)\right).
\end{align*}
The least-squares cost function is strongly convex while the logistic cost function is only strictly convex. However, it can be shown that gradient descent still achieves linear convergence when applied to the logistic cost function \cite{plinequality}.

A ``weight vector,'' $x\in\R^d$, predicts an output, $b$, from an input, $a$. For least-squares regression, $b=\langle a,x\rangle$. For logistic regression, $b=\text{sign}(\langle a,x\rangle)$; more precisely, $x$ predicts $b=1$ with probability $\left(1+\exp(-\langle a,x\rangle)\right)^{-1}$ and $b=-1$ with the remaining probability. The minimizer, $x_t\opt$, of the respective objective function is the ``optimal'' weight vector.

\subsection{Least squares regression}

For the least-squares regression we considered the case $n=20$ and $d=5$. We generated $A$ by defining its singular value decomposition. For its left and right-singular vectors, we sampled two Haar distributed orthogonal matrices, $U\in\R^{n\times n}$ and $V\in\R^{d\times d}$. We let its singular values be equally spaced from $1/\sqrt{\kappa}$ to 1. We generated $b_t$ by varying $x_t\opt$ via a random walk with $\sigma=1$ and adding a low-level of Gaussian noise (mean, 0; standard deviation, $10^{-3}$) to the corresponding outputs. We initialized $x_0\opt$ as the vector of ones. For each $\kappa$, we ran the experiment 200 times and took the average of the tracking errors.

For $\OLNM$, we set $T=\lfloor (2+\sqrt{2})\sqrt{\kappa}\rfloor$. However, instead of updating the $x$ iterate by the $z$ iterate only every $T$ iterations, we updated it every iteration. Doing so works better in practice, though we lose the theoretical guarantees of Theorem \ref{thm:onlineNesterov}. However, the theory only requires that the $x$ iterates do not move away from the minimizer that the $z$ iterates are converging to. Not updating the $x$ iterates ensures they do not move away, but it is too conservative in practice.

For all the algorithms, we initialized $x_0=x_0\opt$ so that we wouldn't need a long ``burn-in'' period while waiting for convergence to the tracking error. We computed the tracking error as the maximum error over the 5th cycle of $T$. Figure \ref{fig:randomwalks} shows the results. $\OLNM$ performs better than online gradient descent, however, online Nesterov's method performs much better than both of the other methods, despite its lack of guarantees.

\begin{figure}[H]
    \centering
    \includegraphics[width=.5\linewidth]{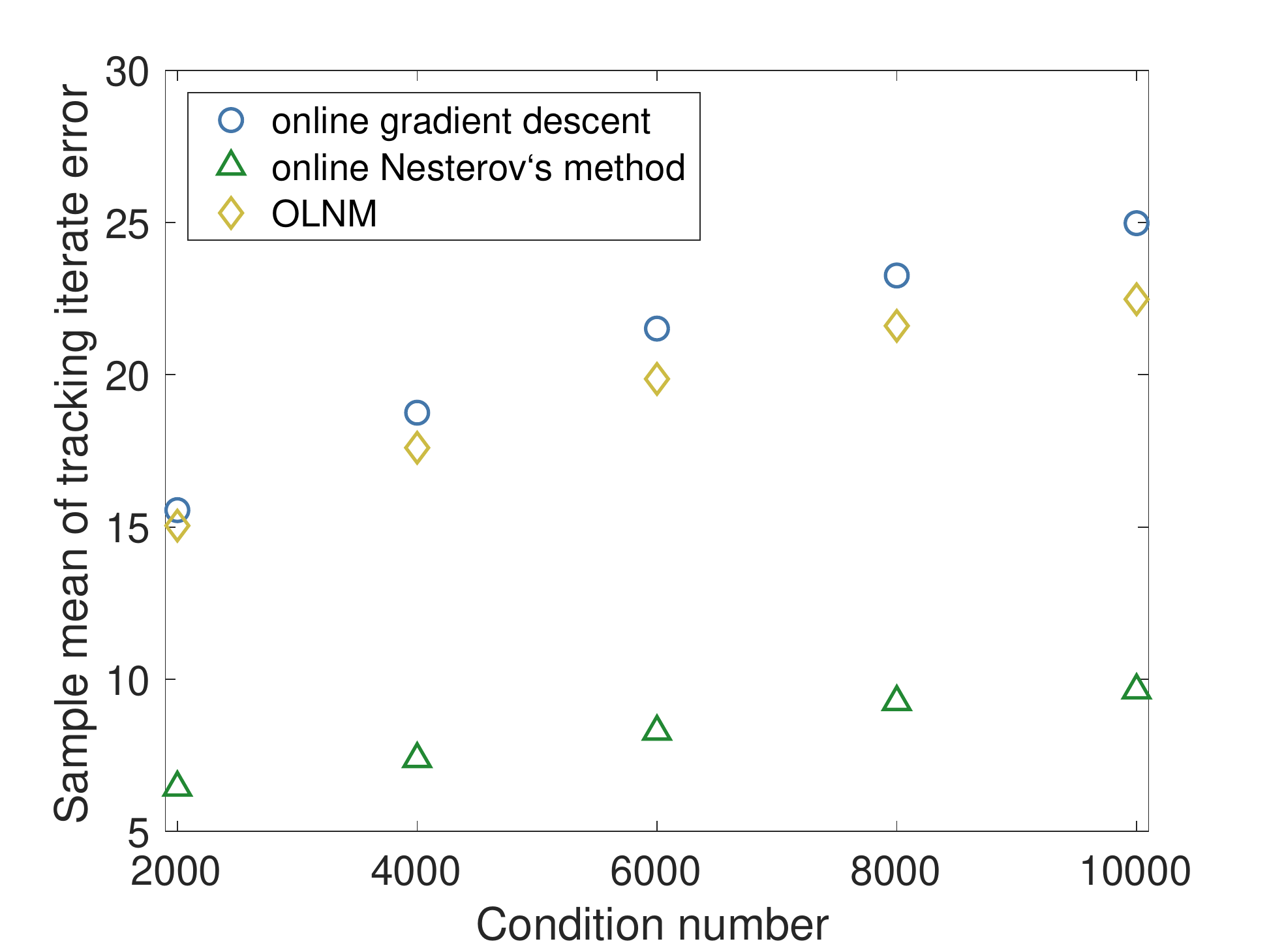}
    \caption{Least-squares regression with random data matrix and random walk variation of the minimizer.}
    \label{fig:randomwalks}
\end{figure}

\subsection{Logistic regression with streaming data}

For logistic regression we again considered the case $n=20$ and $d=5$. We generated $A$ in the same way as for least-squares regression, but with singular values drawn from the uniform distribution on (0,1) and then scaled to have maximum singular value equal to $2\sqrt{L}$ (since the strong smoothness constant of the cost function is $\|A\|_2^2/4$). We initialized $b_t$ from the Rademacher distribution and uniformly at random chose one label to flip each time step.

A classic problem that logistic regression is used for is spam classification. The problem is to predict whether an email is spam or not based on a list of features. Since emails are received in a streaming fashion, this is a time-varying problem. Thus, when an email first arrives, we may report that it is spam, but then later realize it is not, or vice versa. Such an extension of spam classification can be incorporated into the time-varying logistic regression framework.

We let $x_c$ be the zero vector. As a weight vector, $x_c$ predicts $1$ or $-1$ with equal probability. For each $L$, we approximated the solution to the fixed point equation $\delta = L\sqrt{\frac{\E(\sigma(\delta))}{\E(R(\delta))}}$ where the expectation is taken over $A$ and the $b_t$'s. Figure \ref{fig:logistic}(a) shows how $\sigma(\delta)$ and $R(\delta)$ vary with $L$. The ratio between $\sigma(\delta)$ and $R(\delta)$ stays constant at $\approx 0.5$.

We ran the experiment 200 times and took the average of the tracking errors. We calculated the tracking error using windows of 100 iterations and stopping when the maximum error over the last two windows was the same as over the last window. Figure \ref{fig:logistic}(b) shows the results. Online gradient descent (which lacks theoretical tracking gradient error bounds) performs slightly better than $\ORGD$ and both outperform $x_c$. Thus, with only one label flipping every time step, it is still possible to track the solution. However, if we sufficiently increased the number of labels that flip every time step, then it would be better to just guess the labels, as $x_c$ does.

\begin{figure}[H]
\subfigure[]{\includegraphics[width=.5\linewidth]{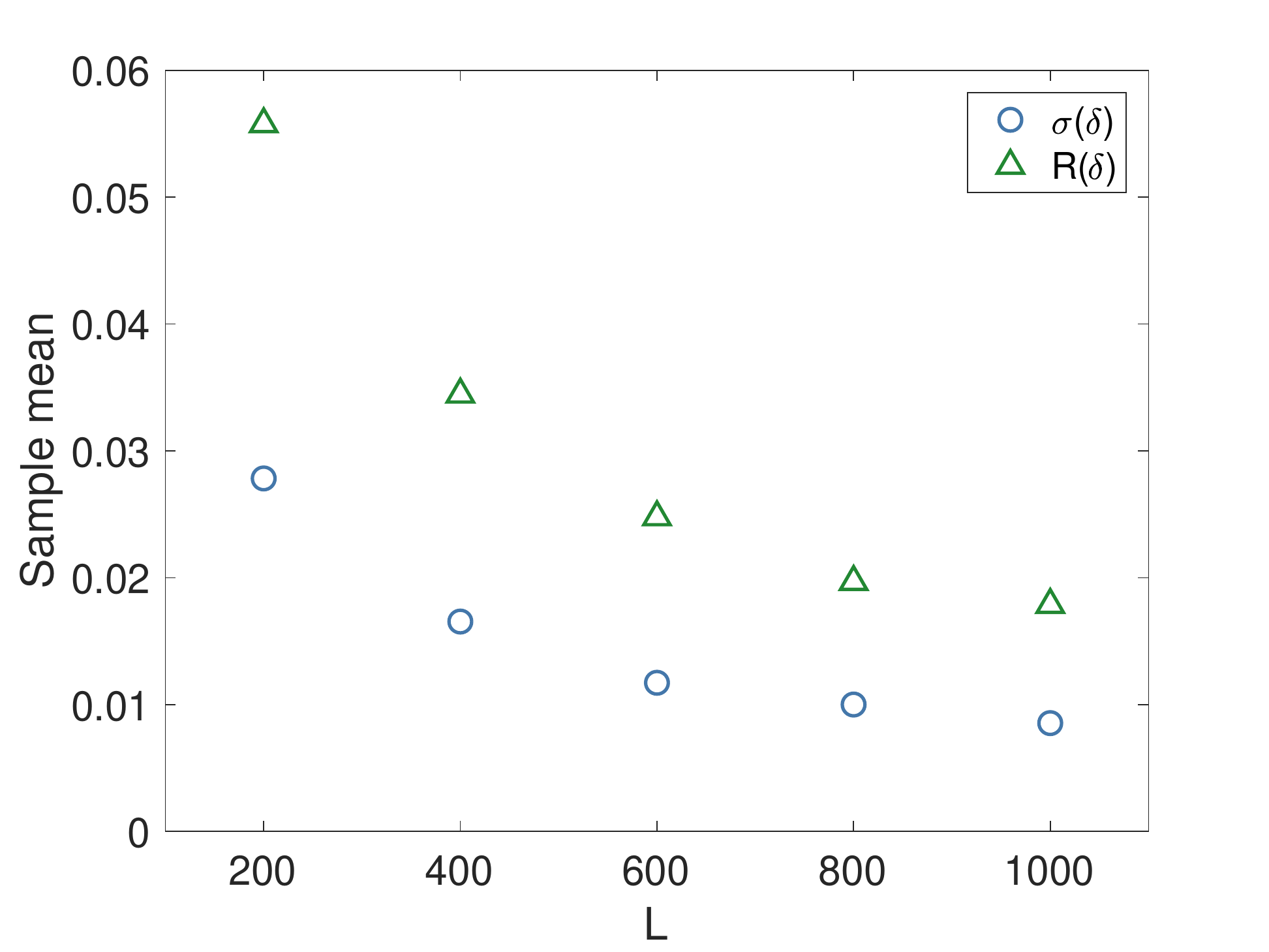}}
\subfigure[]{\includegraphics[width=.5\linewidth]{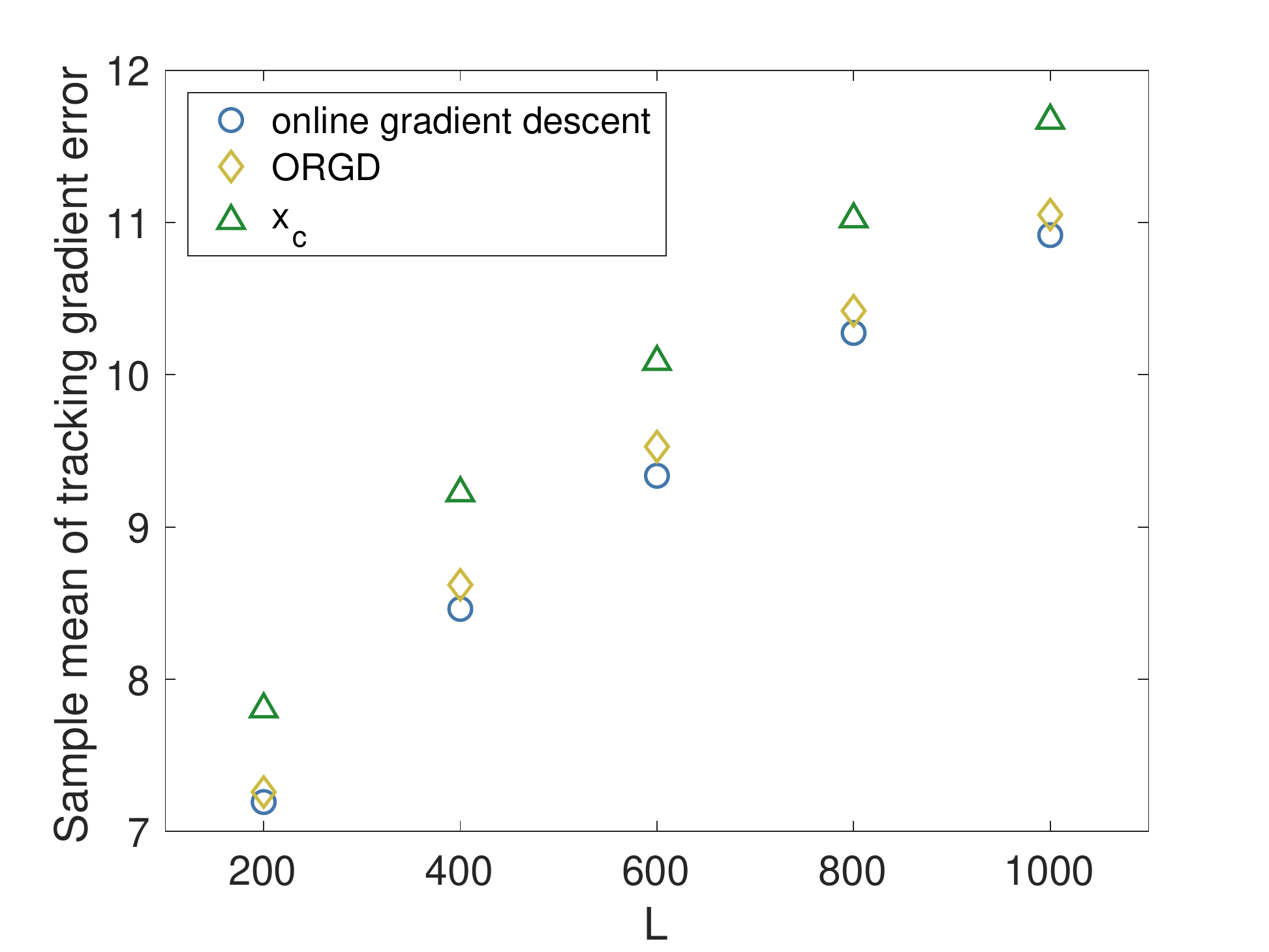}}
\caption{Logistic regression with random data matrix and  randomly flipping labels.}
\label{fig:logistic}
\end{figure}

\rev{
\section{Conclusions} \label{sec:conclusion}
}
By categorizing classes of functions based on the minimizer variation, this paper was able to generalize results from batch optimization to the online setting. These results are important for time-varying optimization, especially for applications in power systems, transportation systems, and communication networks. We showed that fast convergence does not necessarily lead to small tracking error. For example, online Polyak's method can diverge even when the minimizer variation is zero. On the other hand, online gradient descent is guaranteed to have a tracking error that does not grow more than linearly with the minimizer variation. We also gave a universal lower bound for online first-order methods and showed that $\OLNM$ achieves it up to a constant factor. It is a future research direction to consider more deeply the connection to the long-steps of interior-point methods and see if a satisfactory criteria can be found for adaptively deciding when to long-step. Perhaps, this enquiry may eventually lead to error bounds for online Nesterov's method itself.

\section*{Acknowledgements}
All three authors gratefully acknowledge support from the NSF program ``AMPS-Algorithms for Modern Power Systems'' under award \# 1923298.

\bibliographystyle{spmpsci_unsrt}
\bibliography{main}

\end{document}